\documentclass[11pt]{amsart}
\usepackage{latexsym,amsfonts,amssymb,amsthm,amsmath,mathrsfs,color,amscd,graphicx,fullpage,parskip,hyperref,bm,bbm,dsfont}

\usepackage{comment}
%\includecomment{comment} %%% to include comments
\includecomment{uncomment}
\usepackage{commath,mathtools,setspace}
\usepackage{paralist}
\usepackage{extarrows}

%\includecomment{comment} % for comments we want to include
\excludecomment{comment} % for comments we want to exclude

\newcommand{\bb}[1]{{\mathbb #1}}

\newcommand{\mder}[2]{\frac{\delta #1}{\delta #2}}

\newtheorem{theorem}{Theorem} %[section]

\newtheorem{lemma}[theorem]{Lemma}
\newtheorem{proposition}[theorem]{Proposition}

\newtheorem{example}[theorem]{Example}
\newtheorem{remark}[theorem]{Remark}
\newtheorem{assumption}[theorem]{Assumption}

\numberwithin{equation}{section}
\numberwithin{theorem}{section}

\newcounter{step}
\setcounter{step}{1}

\begin{document}
	
	\title%[]
	{New uniqueness results for a mean field game of controls}
	
	\author{P.~Jameson Graber}
	\thanks{National Science Foundation under NSF Grant DMS-2045027.}
	\address{J. Graber: Baylor University, Department of Mathematics;\\
		Sid Richardson Building\\
		1410 S.~4th Street\\
		Waco, TX 76706\\
		Tel.: +1-254-710- \\
		Fax: +1-254-710-3569 
	}
	\email{Jameson\_Graber@baylor.edu}
	
	\author{Elizabeth Matter}
	\address{E.~Matter: Baylor University, Department of Mathematics;\\
		Sid Richardson Building\\
		1410 S.~4th Street\\
		Waco, TX 76706
	}
	\email{Ellie\_Carr3@baylor.edu}	
	%\subjclass[2020]{35Q89, 35F61, 49N80}
	\date{\today}

	\begin{abstract}
        We propose a new approach to proving the uniqueness of solutions to a certain class of mean field games of controls. In this class, the equilibrium is determined by an aggregate quantity $Q(t)$, e.g. the market price or production, which then determines optimal trajectories for agents. Our approach consists in analyzing the relationship between $Q(t)$ and corresponding optimal trajectories to find conditions under which there is at most one equilibrium. We show that our conditions do not match those prescribed by the Lasry-Lions monotonicity condition, nor even displacement monotonicity, but they do apply to economic models that have been proposed in the literature.
	\end{abstract}

	\keywords{mean field games, }

	\maketitle
	
	\tableofcontents
	
	\section{Introduction}

    Mean Field Games (MFG) were introduced in \cite{lasry_mean_2007} to describe Nash Equilibria of a large population of players in differential games. Each individual seeks to optimize their outcome, while playing against the distribution of all the other players. In a mean field game of controls, each player's cost depends on the distribution of both states and controls. One motivating example we consider in this paper is the Cournot mean field game of controls, a model of the production of an exhaustible resource by a continuum of producers. Much recent work has been done with this model, e.g. in \cite{camilli_learning_2024, chan_fracking_2017, chan_bertrand_2014, graber_existence_2018, graber_master_2023, harris_games_2010, jameson_graber_nonlocal_2021, kobeissi_classical_2022, kobeissi_mean_2022}. 

    An important question in mean field games is if there exists a unique Nash Equilibrium. Lasry and Lions introduced the \emph{Lasry-Lions monotonicity condition} as a sufficient condition to show uniqueness of equilibria. This condition has been the most popular criterion for establishing uniqueness in the literature on mean field games. In fact, it has also been shown in situations where the Lasry-Lions condition does not hold, there is non-uniqueness of equilibria for long time horizon \cite{bardi_non-uniqueness_2019}. In such a case, one way to recover uniqueness is by limiting the time horizon. However, in this paper we will focus on showing uniqueness for an arbitrarily long time horizon. For mean field games of controls, Cardaliaguet and Lehalle in \cite{cardaliaguet_mean_2018} proved uniqueness under the Lasry-Lions condition. More recently \emph{displacement monotonicity} has also been used in proving uniqueness (see \cite{gangbo_global_2022, gangbo_mean_2022, meszaros_mean_2024, bensoussan_stochastic_2019, bensoussan_control_2023, ahuja_wellposedness_2016}). In \cite{graber_monotonicity_2023}, the authors introduced two new types of monotonicity, and showed each of the four conditions were in dichotomy with one another. Since none of these conditions can fully describe all classes of mean field games which have a unique equilibrium, it is natural to ask what doors these new monotonicity conditions open that were closed before.

    In this paper we further explore the monotonicity condition $(\Sigma)$ used in \cite{graber_monotonicity_2023}.
     Conceptually, condition $(\Sigma)$ applies to any mean field game in which Nash equilibrium is equivalent to finding a trajectory, which we call here $Q(t)$, that is a fixed point of the map
    \begin{equation*}
    	Q(t) \mapsto \text{distribution of optimal states and controls} \ \mu_Q(t) \mapsto F(\mu_Q(t)).
	\end{equation*}
	(Think, for example, of market price or aggregate production.)
	If the \emph{error map} $E[Q] := Q - F(\mu_Q(\cdot))$ is strictly monotone, then the Nash equilibrium is unique.
     We prove that this monotonicity condition is met under certain assumptions and show that examples under these assumptions do not have to meet the Lasry-Lions or even displacement monotonicity condition. 
     The assumptions are met by a class of examples inspired by the Cournot competition model from \cite{chan_fracking_2017}, in which the Nash equilibrium is given by a market clearing condition.
    
    The rest of this introduction will be spent describing the model we plan to analyze.  In Section \ref{sec: Error Derivative} we will show sufficient conditions under which the error map is monotone. It is convenient to consider separately two cases.
   	In the first case, the Lagrangian does not depend on the position variable, and we find that the game has a special structure that we exploit to prove uniqueness in a surprisingly simple way; the second case is more general. We conclude the paper with the crucial Section \ref{sec: Examples}, which is devoted to examples. We show that mean field games such as the one considered in \cite{chan_fracking_2017} satisfy the $(\Sigma)$ condition but are not Lasry-Lions monotone. This observation leads us to believe that recent results on Cournot competition \cite{jameson_graber_nonlocal_2021, graber_master_2023} could be improved by dispensing with the ``smallness condition'' used to prove uniqueness. Nevertheless, in this paper we deal only with first-order models without boundary conditions or state constraints, thus avoiding the heavy technicalities and focusing solely on the issue of monotonicity.

 \subsection{Description of the Model}
	Our goal is to prove existence and uniqueness of solutions to the following system:
	\begin{subequations} \label{mfg}
		\begin{align}
			\label{hj} -\partial_t u + H(x,D_x u,Q) &= 0, &\text{in}~\bb{R}^d \times (0,T),\\
			\label{fp} \partial_t m - \nabla \cdot \del{D_p H(x,D_x u,Q)} &= 0, &\text{in}~\bb{R}^d \times (0,T),\\
			\label{Q} -\int_{\bb{R}^d} D_p H(x,D_x u(x,t),Q(t))m(x,t)\dif x &= Q(t) &\text{in}~(0,T),\\
			\label{data} m(x,t) = m_0(x), \quad u(x,T) &= g(x), &\text{in}~\bb{R}^d.
		\end{align}
	\end{subequations}
	System \eqref{mfg} is a  type of MFG of controls, inspired by market competition models. From the Lagrangian $L$, we define the Hamiltonian $H$ by 
 \begin{equation}
     H(x,p,Q):=\sup_{v\in\bb{R}^d}\del{p\cdot v-L(x,v,Q)}.
 \end{equation}
	It corresponds to a game in which players seek to minimize
	\begin{equation}
		\int_0^T L\del{x(t),\dot{x}(t),Q(t)}\dif t + g(x(T))
	\end{equation}
	treating the function $Q(t)$ as given.
	Equation \eqref{Q} is the equilibrium condition.
	It says that $Q(t)$ is the average of all optimal velocities played against $Q(t)$ itself (e.g.~the aggregate equilibrium production).
	An alternative way to determine $Q(t)$ is to compute optimal trajectories using the Euler-Lagrange equations:
	\begin{subequations} 
		\label{E-L}
		\begin{align}
		\od{}{t}D_v L\del{x(t),\dot{x}(t),Q(t)} &= D_x L\del{x(t),\dot{x}(t),Q(t)},\\ 
		x(0) &= x_0,\\
		D_v L\del{x(T),\dot{x}(T),Q(T)}  &= -Dg(x(T)).
		\end{align}
	\end{subequations}
	The solution of System \eqref{E-L} will be denoted $x(t;x_0,Q)$. The assumptions in Section \ref{sec: Error Derivative} will ensure that the Euler-Lagrange equations are solvable, in particular the convexity of $L$ with respect to the pair $(x,v)$. We realize such an assumption will limit the generality of the result. Nevertheless, we are able to prove uniqueness in cases that have already been proposed in the literature for which Lasry-Lions and Displacement Monotonicity may not be applicable.

	Note that $x(t;x_0,Q)$ depends on $Q$ is a nonlocal way, i.e.~not on $Q(t)$ but on the entire trajectory $Q:[0,T] \to \bb{R}^d$.
	Equation \eqref{Q} is equivalent to
	\begin{equation} \label{Q xdot}
		Q(t) = -\int_{\bb{R}^d} \dot{x}(t;x_0,Q)\dif m_0(x_0).
	\end{equation}
	Define the error map $E[Q]$ to be
	\begin{equation} \label{error map}
		E[Q](t) := Q(t) + \int_{\bb{R}^d} \dot{x}(t;x_0,Q)\dif m_0(x_0).
	\end{equation}
	This is the map which shows how far $Q$ is from being a fixed point of the right hand side of equation \eqref{Q xdot}. In order to solve System \eqref{mfg}, we first solve \eqref{Q xdot} by arguing that the error map is strictly monotone with respect to $Q$ in $L^2([0,T],\bb{R}^d)$.
	
\section{Monotonicity of the Error Map}\label{sec: Error Derivative}
    \subsection{When the Lagrangian does not depend on $x$}
    We aim to show the monotonicity of $E$ first for the case where there is no $x$ dependence in the Lagrangian. This case has interesting properties and is relevant to the Cournot model where the Lagrangian is $L(v,Q)=vP(-v+\varepsilon Q)$, which we will discuss further in Section \ref{sec: Examples}. We will consider Lagrangians which meet the following assumptions.
    \begin{assumption}\label{Lassum}
        The Lagrangian $L:\bb{R}^d\times\bb{R}^d\to \bb{R}$ is $C^2$ and satisfies the following:
        \begin{itemize}
            \item $L(v,Q)$ is strictly convex with respect to $v$.
            \item $\del{D^2_{vv}L(v,Q)}^{-1}\del{-D^2_{vQ}L(v,Q)} + I>0$ for all $v,Q$. (We say an $n\times n$ matrix $A>0$ in the sense that $x^TAx>0$ for all $x\in\mathbb{R}^n\setminus\cbr{0}$. We denote by $I$ the identity matrix.)
        \end{itemize}
    \end{assumption}
\begin{lemma}\label{lem: xdot}
    If $L$ satisfies \ref{Lassum}, then there exists a unique solution of the Euler-Lagrange equations \eqref{E-L}. Its velocity is given by
    \begin{equation} \label{optimal dotx}
    	\dot{x}(t;x_0,Q)=D_pH(-Dg(x_T(x_0,Q)),Q(t)). 
    \end{equation}
\end{lemma}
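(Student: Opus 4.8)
The plan is to exploit the fact that, with no $x$-dependence in $L$, the Euler--Lagrange system admits a conserved momentum, which collapses the infinite-dimensional problem to a single finite-dimensional equation for the terminal position. First I would observe that $D_xL\equiv 0$, so the first equation in \eqref{E-L} reads $\od{}{t}D_vL\del{\dot x(t),Q(t)}=0$; hence the momentum $p(t):=D_vL\del{\dot x(t),Q(t)}$ is constant along any solution. The terminal condition $D_vL\del{\dot x(T),Q(T)}=-Dg(x(T))$ then forces $p(t)\equiv -Dg(x_T)$, where $x_T:=x(T;x_0,Q)$. Since $L$ is strictly convex in $v$, the Legendre duality $p=D_vL(v,Q)\iff v=D_pH(p,Q)$ applies, and inverting the relation $D_vL(\dot x(t),Q(t))=-Dg(x_T)$ gives exactly $\dot x(t)=D_pH\del{-Dg(x_T),Q(t)}$, which is \eqref{optimal dotx}. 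Thus the velocity formula is immediate once a solution is known to exist, and the only real content is the existence and uniqueness of the terminal value $x_T$.

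Next I would close the loop. Integrating the velocity formula from $0$ to $T$ against $x(0)=x_0$ yields the consistency equation
\begin{equation*}
	x_T = x_0 + \int_0^T D_pH\del{-Dg(x_T),Q(t)}\dif t .
\end{equation*}
So solving \eqref{E-L} is equivalent to finding a zero on $\bb{R}^d$ of
\begin{equation*}
	G(y):= y - x_0 - \int_0^T D_pH\del{-Dg(y),Q(t)}\dif t .
\end{equation*}
Every zero of $G$ produces, via the formula above, a solution of the Euler--Lagrange system, and conversely; so unique solvability of \eqref{E-L} is equivalent to $G$ having a single zero.

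To produce that zero I would apply a global inverse function argument. Differentiating, and using that $D^2g(y)$ is independent of $t$ so that it factors out of the integral, I get $DG(y)=I+\bar A(y)\,D^2g(y)$ with $\bar A(y):=\int_0^T D^2_{pp}H\del{-Dg(y),Q(t)}\dif t$. The Legendre relations give $D^2_{pp}H=\del{D^2_{vv}L}^{-1}$, which is positive definite by strict convexity, so $\bar A(y)>0$; assuming $g$ convex so that $D^2g(y)\ge 0$, the product $\bar A(y)\,D^2g(y)$ is similar to the positive semidefinite matrix $\bar A(y)^{1/2}D^2g(y)\,\bar A(y)^{1/2}$ and therefore has real, nonnegative eigenvalues. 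Consequently $DG(y)$ has real eigenvalues $\ge 1$ and is invertible at every $y$. The decisive point is that this holds for every horizon $T$, since $\bar A(y)>0$ no matter how large $T$ is; this is exactly why no short-time or smallness hypothesis is needed. Combined with properness of $G$ (which follows from appropriate growth hypotheses on $g$ and $L$), Hadamard's global inverse function theorem makes $G$ a diffeomorphism of $\bb{R}^d$, so it has exactly one zero.

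The main obstacle is this last step: establishing the unique solvability of $G(y)=0$ uniformly in $T$. The symmetric part of $DG$ need not be positive definite, so strict monotonicity of $G$ is not immediate and a bare contraction estimate would only work for small $T$; the $t$-independence of $D^2g$ (letting it factor out of the integral) together with the uniform positivity of $\bar A$ is precisely the structural feature that rescues the argument at arbitrary horizon. I would also need to pin down the standing regularity and growth hypotheses---superlinearity of $L$ in $v$ for properness and convexity of $g$ for the sign of $D^2g$---under which existence and uniqueness hold. For uniqueness alone one can alternatively invoke strict convexity of the action $\int_0^T L(\dot x,Q)\dif t+g(x(T))$, of which every solution of \eqref{E-L} is the unique minimizer.
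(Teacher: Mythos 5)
Your proposal is correct, and its first half is essentially identical to the paper's proof: the paper likewise observes that $\od{}{t}D_vL(\dot x(t),Q(t))=0$ forces $D_vL(\dot x(t),Q(t))=-Dg(x(T))$, inverts via strict convexity to get \eqref{optimal dotx}, and integrates to arrive at the implicit equation \eqref{eq: x_T} for $x_T$. The divergence is in the final solvability step, and there your treatment is genuinely different and more careful. The paper disposes of it in one line, asserting that the left-hand side of \eqref{eq: x_T} is ``a monotone function of $x_T$ by Assumption \ref{Lassum}''; you are right to be suspicious of this. The Jacobian is $I+\bar A(y)D^2g(y)$ with $\bar A(y)=\int_0^T D^2_{pp}H(-Dg(y),Q(t))\dif t$, and when $d>1$ the symmetric part of a product of a positive definite and a positive semidefinite matrix need not be positive, so strict monotonicity is not immediate (it is in $d=1$); moreover the claim silently uses $D^2g\geq 0$, which is not part of Assumption \ref{Lassum} (it appears only in the later theorems). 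Your replacement---similarity of $\bar A(y)D^2g(y)$ to $\bar A(y)^{1/2}D^2g(y)\bar A(y)^{1/2}$, giving $DG(y)$ real eigenvalues $\geq 1$, then Hadamard's global inverse theorem---is exactly the linear-algebra device the paper implicitly relies on in Lemma \ref{lem: D_Qx_T} to invert $I+TD^2_{pp}H\,D^2g$, so your route makes explicit what the paper leaves unsaid. What the paper's one-liner buys is brevity; what yours buys is an argument valid for all $T$ and $d>1$, at the cost of the additional hypotheses you honestly flag: convexity of $g$, and properness of $G$, which is a genuinely separate input since real eigenvalues $\geq 1$ do not bound $\enVert{DG(y)^{-1}}$ for nonsymmetric matrices. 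Your closing alternative---existence and uniqueness via strict convexity and coercivity of the action $\int_0^T L(\dot x,Q)\dif t+g(x(T))$---is also sound and is in fact the strategy the paper itself adopts in the $x$-dependent case (Lemma \ref{thm: well posed}, via Cannarsa--Sinestrari), so either patch is consistent with the paper's framework.
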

\begin{proof}
   If $L$ does not depend on $x(t)$ then \eqref{E-L} becomes
 	\begin{subequations} 
		\label{E-L2}
		\begin{align}
		\od{}{t}D_v L\del{\dot{x}(t),Q(t)} &= 0,\\ 
		x(0) &= x_0,\\
		D_v L\del{\dot{x}(T),Q(T)}  &= -Dg(x(T)).
		\end{align}
	\end{subequations}
 Then $D_v L\del{\dot{x}(t),Q(t)}=-Dg(x(T))$, and because $D^2_{vv}L\del{\dot{x}(t),Q(t)}>0$ 
 \begin{equation}
     \dot{x}(t;x_0,Q)=D_pH(-Dg(x(T)),Q(t)).
 \end{equation}
 Now we show the dependence of $x(T)$ on $x_0$ and $Q$.
\begin{equation}
    x(T)-x(0)=\int_0^T\dot{x}(t;x_0,Q)dt=\int_0^TD_pH(-Dg(x(T)),Q(t))\dif t
\end{equation}
so we define $x_T(x_0,Q)=x(T;x_0,Q)$ implicitly by the following equation
\begin{equation}\label{eq: x_T}
    x_T(x_0,Q)-\int_0^TD_pH(-Dg(x_T(x_0,Q)),Q(t))\dif t=x_0.
\end{equation} 
Note this is solvable as the left side of \eqref{eq: x_T} is a monotone function of $x_T$ by Assumption \ref{Lassum}.
Therefore the optimal velocity played against $Q(t)$ is given by \eqref{optimal dotx}.
\end{proof}

\begin{lemma}\label{lem:Q const}
    Suppose $D^2_{pQ}H(p,Q)+I>0$ for all $p,Q$. Then if $Q(t)$ solves 
    \begin{equation}\label{eq: Q}
        Q(t)=-\int D_pH(-Dg(x_T(x_0,Q)),Q(t))\dif m_0(x_0),
    \end{equation}
    $Q(t)$ is constant.
\end{lemma}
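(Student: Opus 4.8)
The plan is to exploit the fact that, once the full trajectory $Q$ is fixed, the right-hand side of \eqref{eq: Q} depends on the time $t$ \emph{only} through the instantaneous value $Q(t)$. Indeed, by Lemma~\ref{lem: xdot} the terminal point $x_T(x_0,Q)$ is determined by $x_0$ and the entire trajectory $Q$ via the implicit relation \eqref{eq: x_T}, and crucially it carries no explicit dependence on $t$. Hence the vector
\begin{equation*}
	p_0(x_0) := -Dg\del{x_T(x_0,Q)}
\end{equation*}
is a fixed, time-independent quantity for each $x_0$, and \eqref{eq: Q} can be rewritten as
\begin{equation*}
	Q(t) = G\del{Q(t)}, \qquad G(q) := -\int D_pH\del{p_0(x_0),q}\dif m_0(x_0),
\end{equation*}
where $G \colon \bb{R}^d \to \bb{R}^d$ depends on $Q$ only through the numbers $p_0(x_0)$ and is otherwise a fixed function of its argument $q$.

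Thus every value $Q(t)$, $t \in [0,T]$, is a fixed point of one and the same map $G$. It therefore suffices to show that $G$ admits \emph{at most one} fixed point, for then $Q(t)$ is forced to equal that common value for all $t$. To this end I would consider
\begin{equation*}
	\Psi(q) := q - G(q) = q + \int D_pH\del{p_0(x_0),q}\dif m_0(x_0),
\end{equation*}
whose zeros are exactly the fixed points of $G$, and argue that $\Psi$ is injective by showing it is strictly monotone.

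The monotonicity follows from the hypothesis $D^2_{pQ}H(p,Q)+I>0$. Differentiating under the integral sign gives $D\Psi(q) = I + \int D^2_{pQ}H\del{p_0(x_0),q}\dif m_0(x_0)$. For each fixed $x_0$ the integrand $I + D^2_{pQ}H(p_0(x_0),q)$ is positive definite by assumption, and since $m_0$ is a probability measure the integral is a convex average of positive-definite matrices, hence itself positive definite (in the quadratic-form sense of Assumption~\ref{Lassum}, which is all we use). Writing, for $q_1 \neq q_2$,
\begin{equation*}
	\del{\Psi(q_1)-\Psi(q_2)}\cdot(q_1-q_2) = \int_0^1 (q_1-q_2)^T D\Psi\del{q_2 + s(q_1-q_2)}(q_1-q_2)\dif s > 0,
\end{equation*}
we conclude that $\Psi$ is strictly monotone and therefore injective. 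Consequently $\Psi(q)=0$ has at most one solution $q^\ast$, so $Q(t)\equiv q^\ast$ is constant.

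The substance of the argument lies entirely in the observation of the first paragraph: separating the time-independent terminal data $p_0(x_0)$ from the pointwise dependence on $Q(t)$ collapses the functional equation \eqref{eq: Q} into a pointwise-in-$t$ fixed-point equation for a \emph{fixed} map. After that I do not expect a genuine obstacle; the only points needing minor care are the justification of differentiating under the integral sign, which is routine given the $C^2$ regularity of $L$ and hence of $H$ from Assumption~\ref{Lassum}, and the standard fact that a strictly monotone map is injective.
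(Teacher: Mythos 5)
Your proof is correct, and it takes a genuinely different route from the paper's. Both arguments hinge on the same key observation, which you correctly identify: once the trajectory $Q$ is fixed, $x_T(x_0,Q)$ carries no dependence on $t$, so the right-hand side of \eqref{eq: Q} depends on $t$ only through $Q(t)$. But from there the paper differentiates \eqref{eq: Q} in $t$, obtaining $\bigl(I+\int D^2_{pQ}H(-Dg(x_T(x_0,Q)),Q(t))\dif m_0(x_0)\bigr)\dot{Q}(t)=0$, and concludes $\dot{Q}\equiv 0$ because the hypothesis $D^2_{pQ}H>-I$ makes the matrix in parentheses positive definite, hence with trivial kernel. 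You instead freeze the map $G(q)=-\int D_pH(p_0(x_0),q)\dif m_0(x_0)$ and show that $\Psi(q)=q-G(q)$ is strictly monotone, so $G$ has at most one fixed point and $Q(t)$ is pinned to that single value for every $t$. Note that the positive-definiteness computation is identical in both proofs --- $D\Psi(q)=I+\int D^2_{pQ}H\,\dif m_0$ is exactly the paper's matrix --- so the difference is purely in how that positivity is deployed: kernel-triviality applied to $\dot Q$ versus injectivity applied to the static equation. What your version buys is that it never assumes $Q(\cdot)$ is differentiable in $t$ (the paper's differentiation step tacitly does; a merely measurable or continuous solution of \eqref{eq: Q} is covered by your argument with no extra work), at the modest cost of a slightly longer argument via the segment integral $\bigl(\Psi(q_1)-\Psi(q_2)\bigr)\cdot(q_1-q_2)=\int_0^1 (q_1-q_2)^T D\Psi\bigl(q_2+s(q_1-q_2)\bigr)(q_1-q_2)\dif s$. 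Your flagged technical points (differentiation under the integral sign, strict positivity of the averaged quadratic form against the probability measure $m_0$) are handled at the same level of rigor as the paper itself, which also differentiates under the integral without comment.
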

\begin{proof}
    Suppose $Q(t)$ solves equation \eqref{eq: Q}. Since $x_T$ depends on the entire trajectory of $Q$, not $Q(t)$, differentiating with respect to $t$ yields
    \begin{equation}
        \dot{Q}(t)=-\int D^2_{pQ}H(-Dg(x_T(x_0,Q)),Q(t))\dot{Q}(t)\dif m_0(x_0).
    \end{equation}
    This becomes
    \begin{equation}
        \del{I+\int D^2_{pQ}H(-Dg(x_T(x_0,Q)),Q(t))\dif m_0(x_0)}\dot{Q}(t)=0.
    \end{equation}
    So if $D^2_{pQ}H(p,Q)>-I$, then $Q(t)$ must be constant.
\end{proof}
\begin{remark}
    Since $D^2_{pQ}H(p,Q)=\del{D^2_{vv}L(v,Q)}^{-1}\del{-D^2_{vQ}L(v,Q)}>-I$ for $v=D_pH(p,Q)$, the optimal control $Q(t)$ under the assumptions \ref{Lassum} must be constant.
\end{remark}
Now we call $Q(t)=Q$ for constant $Q$. Our next step is to show the monotonicity of the map 
\begin{equation}
   E[Q]:=Q+\int D_pH(-Dg(x_T(x_0,Q)),Q)\dif m_0(x_0).
\end{equation}
 
\begin{lemma}\label{lem: D_Qx_T}
    If $D_{pQ}^2H(p,Q)>-I$ for all $p,Q$, and $L$ satisfies the assumptions of \ref{Lassum}, 
    $$D_Qx_T(x_0,Q)=\del{I+TD^2_{pp}H(-Dg(x_T(x_0,Q)),Q)D^2g(x_T(x_0,Q))}^{-1}\del{TD^2_{pQ}H(-Dg(x_T(x_0,Q)),Q)}.$$
\end{lemma}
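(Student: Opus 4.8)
The plan is to obtain the formula by straightforward implicit differentiation of the relation that defines $x_T$. Since the hypotheses of this lemma include those of Lemma \ref{lem:Q const} (indeed $D^2_{pQ}H > -I$ is assumed), the equilibrium $Q(t)$ is constant in $t$, so the $t$-integral in \eqref{eq: x_T} collapses to a single evaluation and $x_T(x_0,Q)$ is characterized by
\begin{equation}
  x_T(x_0,Q) - T\,D_pH\bigl(-Dg(x_T(x_0,Q)),Q\bigr) = x_0 .
\end{equation}
I would regard this as the zero set of $F(y,Q) := y - T\,D_pH(-Dg(y),Q) - x_0$ and read off $D_Qx_T$ from the implicit function theorem, so that $D_Qx_T = -(D_yF)^{-1}D_QF$.

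First I would verify that the implicit function theorem applies. Because $L$ is $C^2$ and strictly convex in $v$, the Hamiltonian $H$ is $C^2$, and together with $g\in C^2$ this makes $F$ continuously differentiable. Its partial Jacobian in the state variable is
\begin{equation}
  D_yF(y,Q) = I + T\,D^2_{pp}H(-Dg(y),Q)\,D^2g(y),
\end{equation}
which is exactly the Jacobian of the left-hand side of \eqref{eq: x_T} whose strict monotonicity was invoked in Lemma \ref{lem: xdot} to produce $x_T$; I would therefore cite that monotonicity to conclude $D_yF$ is invertible, hence that $Q\mapsto x_T(x_0,Q)$ is $C^1$. The remaining computation is a single application of the chain rule. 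Differentiating the defining relation in $Q$ with $x_0$ held fixed, and remembering that $D_pH$ depends on $Q$ both directly and through the slot $p=-Dg(x_T)$, I obtain
\begin{equation}
  D_Qx_T - T\Bigl(-D^2_{pp}H\,D^2g\,D_Qx_T + D^2_{pQ}H\Bigr) = 0,
\end{equation}
where the Hessians of $H$ are evaluated at $(-Dg(x_T),Q)$ and $D^2g$ at $x_T$. Collecting the $D_Qx_T$ terms gives $\bigl(I + T\,D^2_{pp}H\,D^2g\bigr)D_Qx_T = T\,D^2_{pQ}H$, and inverting the already-justified invertible matrix yields the claimed formula.

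The only genuine obstacle is the invertibility of $I + T\,D^2_{pp}H(-Dg(x_T),Q)\,D^2g(x_T)$, which is simultaneously what makes $x_T$ well defined and what makes the formula meaningful; everything else is bookkeeping. I expect this to follow from the same structure used in Lemma \ref{lem: xdot}: strict convexity of $L$ in $v$ forces $D^2_{pp}H$ to be positive definite, and if $g$ is convex then $D^2g$ is positive semidefinite, so $D^2_{pp}H\,D^2g$ is similar to the positive semidefinite matrix $(D^2_{pp}H)^{1/2}D^2g\,(D^2_{pp}H)^{1/2}$ and thus has nonnegative spectrum. Consequently $I + T\,D^2_{pp}H\,D^2g$ has spectrum bounded below by $1$ and is invertible, completing the justification.
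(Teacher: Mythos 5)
Your proof is correct and takes essentially the same route as the paper's: invoke Lemma \ref{lem:Q const} to reduce \eqref{eq: x_T} to $x_T - TD_pH(-Dg(x_T),Q) = x_0$, differentiate implicitly in $Q$ via the chain rule, and solve for $D_Qx_T$. The one genuine addition is your justification of the invertibility of $I + TD^2_{pp}H\,D^2g$ through the similarity to $(D^2_{pp}H)^{1/2}D^2g\,(D^2_{pp}H)^{1/2}$ --- a point the paper's proof leaves implicit; note this argument requires $D^2g \geq 0$, which is not among the lemma's stated hypotheses (nor in Assumption \ref{Lassum}) but is assumed in the surrounding results, so your observation correctly identifies a hypothesis the paper uses tacitly.
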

\begin{proof}
    By Lemma \ref{lem:Q const}, $Q$ is constant so equation \eqref{eq: x_T} becomes 
    \begin{equation}
        x_T(x_0,Q)-TD_pH(-Dg(x_T(x_0,Q)),Q)=x_0.
    \end{equation}
    Taking the implicit partial derivative with respect to $Q$ yields
    \begin{equation}
        D_Qx_T+TD^2_{pp}H(-Dg(x_T),Q)D^2g(x_T)D_Qx_T-TD^2_{pQ}H(-Dg(x_T),Q)=0.
    \end{equation}
    Solving for $D_Qx_T$ completes the proof.
\end{proof}

\begin{theorem}\label{thm: G mon1}
    Suppose $L$ satisfies the assumptions of \ref{Lassum}, and $D^2g>0$. Then $E$ is monotone in the sense that for all $Q_0,Q_1\in\bb{R}^d$ such that $Q_0\not=Q_1$,
    \begin{equation}
        \langle E[Q_1]-E[Q_0],Q_1-Q_0\rangle>0
    \end{equation}
\end{theorem}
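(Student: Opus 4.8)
The plan is to reduce the statement to a pointwise-in-$Q$ property of the Jacobian $D_Q E[Q]$ and then integrate along a segment. Since by Lemma \ref{lem:Q const} the relevant $Q$ are constant vectors, $E$ is an honest map $\bb{R}^d \to \bb{R}^d$, and for $Q_s := Q_0 + s(Q_1-Q_0)$ the fundamental theorem of calculus gives
\begin{equation*}
	\langle E[Q_1]-E[Q_0],Q_1-Q_0\rangle = \int_0^1 \langle D_Q E[Q_s](Q_1-Q_0),\,Q_1-Q_0\rangle \dif s.
\end{equation*}
Thus it suffices to show that $D_Q E[Q]$ has \emph{positive definite symmetric part} for every constant $Q$: then each integrand is strictly positive whenever $Q_1\neq Q_0$, and the theorem follows.

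The next step is to compute and simplify the Jacobian. Differentiating $E[Q]=Q+\int D_pH(-Dg(x_T(x_0,Q)),Q)\dif m_0(x_0)$ under the integral, using the chain rule and the formula for $D_Qx_T$ from Lemma \ref{lem: D_Qx_T}, I write $A:=D^2_{pp}H(-Dg(x_T),Q)$, $G:=D^2g(x_T)$, $B:=D^2_{pQ}H(-Dg(x_T),Q)$, and $N:=I+TAG$. The integrand of $D_QE$ is $-AG\,D_Qx_T + B = -TAGN^{-1}B + B$, and the key algebraic simplification is $TAG=N-I$, so that $TAGN^{-1}=I-N^{-1}$ and the integrand collapses to $N^{-1}B$. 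Hence $D_QE[Q]=I+\int N^{-1}B\dif m_0$, and since $I=\int I\dif m_0$ we obtain the compact form
\begin{equation*}
	D_QE[Q]=\int \del{I+TAG}^{-1}\del{I+TAG+B}\dif m_0(x_0).
\end{equation*}
Here $A=\del{D^2_{vv}L}^{-1}>0$ by strict convexity of $L$, $G=D^2g>0$ by hypothesis, and $I+B>0$ is exactly Assumption \ref{Lassum} rewritten through the Remark (i.e.\ $D^2_{pQ}H+I>0$).

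The heart of the matter is then the purely linear-algebraic claim: for symmetric positive definite $A,G$, for $T>0$, and for $B$ with $I+B>0$, the matrix $\del{I+TAG}^{-1}\del{I+TAG+B}$ has positive definite symmetric part. I expect this to be the main obstacle. In the scalar case $d=1$ it is immediate, since $N=1+TAG>1$ forces $1+B/N$ to lie strictly between $1$ and $1+B>0$. For the matrix case the natural route is to diagonalize $AG$ in the $A^{-1}$-inner product $\langle x,y\rangle_{A^{-1}}:=x^TA^{-1}y$, with respect to which $AG$ (hence $N$) is self-adjoint and positive, reducing the question to controlling $\langle N^{-1}(N+B)\xi,\xi\rangle$.

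The delicate point, and where I expect the real work to lie, is that $B=D^2_{pQ}H$ need not be symmetric, while the hypothesis $I+B>0$ constrains only its symmetric part; its antisymmetric part is left free. Because $N^{-1}$ in general does not commute with the antisymmetric part of $B$, the symmetric part of $N^{-1}B$ picks up a commutator term $\tfrac12\del{N^{-1}K-KN^{-1}}$ (with $K$ the antisymmetric part of $B$) which is itself symmetric and can carry negative eigenvalues of size comparable to $\|K\|\,\|N^{-1}\|$. Closing the estimate therefore seems to require additional structure beyond Assumption \ref{Lassum} alone --- for instance $d=1$, or symmetry of the mixed Hessian $D^2_{pQ}H$, or a quantitative smallness of its antisymmetric part relative to $A$, $G$, and $T$ --- and pinning down precisely which structural hypothesis makes the general matrix case go through is the crux of the argument.
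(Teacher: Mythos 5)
Your reduction and Jacobian computation coincide with the paper's own proof: the paper likewise differentiates under the integral using Lemma \ref{lem: D_Qx_T} and, after the same algebraic collapse $TAG(I+TAG)^{-1}=I-(I+TAG)^{-1}$, arrives at $DE[\tilde Q]=I+\int (I+TAG)^{-1}B\,\dif m_0$, which is exactly its equation \eqref{eq: DE}. Your integral form of the mean-value step is in fact the more careful variant, since the mean value theorem does not hold for vector-valued maps and the paper's line ``$\langle E[Q_1]-E[Q_0],Q_1-Q_0\rangle=\langle DE[\tilde Q](Q_1-Q_0),Q_1-Q_0\rangle$'' should properly be routed through the scalar function $s\mapsto\langle E[Q_s],Q_1-Q_0\rangle$ or through your integral identity.

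The step you flag as the crux and decline to complete is precisely the step the paper does not prove either: from \eqref{eq: DE} it concludes positivity ``since $D^2_{pQ}H>-I$ and $D^2g>0$,'' with no argument addressing the lack of symmetry of $B=D^2_{pQ}H$ or the non-commutativity of $(I+TAG)^{-1}$ with its antisymmetric part --- exactly the commutator obstruction you isolate. Moreover, your suspicion that Assumption \ref{Lassum} alone does not suffice in $d\ge 2$ is correct, not excessive caution. Take $d=2$, $T=1$, $L(v,Q)=\frac12|v|^2-v^{\top}BQ$ with $B=\bigl(\begin{smallmatrix}0&k\\-k&0\end{smallmatrix}\bigr)$ antisymmetric, so that $D^2_{vv}L=I$, $D^2_{pQ}H=B$, and $I+B>0$ holds trivially in the quadratic-form sense because $x^{\top}Bx=0$; take $g(x)=\frac{g_1}{2}x_1^2+\frac{g_2}{2}x_2^2$ with $g_1\neq g_2$, so $D^2g>0$. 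Then $x_T=(I+TG)^{-1}(x_0+TBQ)$ and $E$ is affine with matrix $I+(I+TG)^{-1}B$, whose symmetric part has eigenvalues $1\pm\frac{|k|}{2}\bigl|\frac{1}{1+Tg_1}-\frac{1}{1+Tg_2}\bigr|$; for $|k|$ large these are indefinite, and choosing $Q_1-Q_0$ along the negative eigenvector gives $\langle E[Q_1]-E[Q_0],Q_1-Q_0\rangle<0$. So the linear-algebraic claim underlying the last line of the paper's proof is genuinely false in general, and the theorem as stated is safe only under the extra structure you name: $d=1$ (which covers the paper's Cournot examples), or symmetry of $D^2_{pQ}H$ with suitable commutation with $AG$, or quantitative smallness of the antisymmetric part. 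In short, your proposal reproduces the paper's argument where it is sound and stops, honestly, exactly where the paper's own proof has a gap.
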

\begin{proof}
    Let $Q_1,Q_0\in \bb{R}^d$. Then 
    \begin{equation}
        \langle E[Q_1]-E[Q_0],Q_1-Q_0\rangle=\langle DE[\tilde{Q}](Q_1-Q_0),Q_1-Q_0\rangle
    \end{equation}
    for some $\tilde{Q}$ between $Q_0$ and $Q_1$. Using Lemma \ref{lem: D_Qx_T},
    \begin{equation}\label{eq: DE}
    \begin{split}
        DE[\tilde{Q}]&=I+\int \del{D_{pQ}^2H(-Dg(x_T),\tilde{Q})-D^2_{pp}H(-Dg(x_T),\tilde Q)D^2g(x_T)D_Qx_T}\dif m_0(x_0) \\
       &=I+\int\del{I+TD^2_{pp}H(-Dg(x_T),\tilde Q)D^2g(x_T)}^{-1}D^2_{pQ}H(-Dg(x_T),\tilde Q)\dif m_0(x_0)
    \end{split}
    \end{equation}
    Now since $D^2_{pQ}H(p,Q)>-I$ and $D^2g(x_T)>0$, 
    \begin{equation}
        \langle E[Q_1]-E[Q_0],Q_1-Q_0\rangle>0
    \end{equation}
    for all $Q_1,Q_0\in \bb{R}^d$ such that $Q_1\not=Q_0$.
\end{proof}

\begin{theorem}
    Suppose $L$ satisfies the assumptions of \ref{Lassum}, $D^2_{vv}L(v,Q)\leq MI$ for some $M<\infty$, and $D^2g\geq cI>0$. Then $E$ is monotone in the sense that for all $Q_1,Q_0\in\mathbb R^d$
\begin{equation}\label{eq: mon}
    \langle E[Q_1]-E[Q_0],Q_1-Q_0\rangle>\del{1-\delta}\enVert{Q_1-Q_0}^2,
\end{equation} 
where $\delta<1$.
\end{theorem}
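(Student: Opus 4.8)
The plan is to run the argument of Theorem~\ref{thm: G mon1} but keep track of constants. By the mean value theorem there is $\tilde Q$ on the segment between $Q_0$ and $Q_1$ with $E[Q_1]-E[Q_0]=DE[\tilde Q]\del{Q_1-Q_0}$, so writing $\xi:=Q_1-Q_0$ it suffices to prove the one-sided bound $\ip{DE[\tilde Q]\xi}{\xi}>\del{1-\delta}\enVert{\xi}^2$ for a constant $\delta<1$ independent of $\tilde Q$ and $\xi$. Substituting the formula \eqref{eq: DE}, this is equivalent to the pointwise estimate
\begin{equation*}
\ip{\del{I+TD^2_{pp}H\,D^2g}^{-1}D^2_{pQ}H\,\xi}{\xi}\ge-\delta\enVert{\xi}^2,
\end{equation*}
valid for every $x_0$ (all derivatives evaluated at $\del{-Dg(x_T),\tilde Q}$), since integrating against $m_0$ and adding the $\enVert{\xi}^2$ coming from the identity term yields \eqref{eq: mon}.

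To produce $\delta$, I would feed in the two quantitative hypotheses. Abbreviate $A:=D^2_{pp}H=\del{D^2_{vv}L}^{-1}$, $B:=D^2g$ and $C:=D^2_{pQ}H$. The bound $D^2_{vv}L\le MI$ gives $A\ge\tfrac1M I$, the bound $D^2g\ge cI$ gives $B\ge cI$, and Assumption~\ref{Lassum} gives $C>-I$. The role of the two new hypotheses is to make the prefactor a genuine contraction: conjugating by $A^{1/2}$ turns $I+TAB=A^{1/2}\del{I+TS}A^{-1/2}$ into a similarity with the symmetric positive definite matrix $I+TS$, where $S:=A^{1/2}BA^{1/2}\ge\tfrac{c}{M}I$, so that $\del{I+TS}^{-1}\le\del{1+\tfrac{Tc}{M}}^{-1}I$. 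In the scalar case this is already the whole story: $\del{I+TAB}^{-1}\in\del{0,\del{1+\tfrac{Tc}{M}}^{-1}}$ together with $C>-1$ forces $\del{I+TAB}^{-1}C>-\del{1+\tfrac{Tc}{M}}^{-1}$ (split into $C\ge0$ and $-1<C<0$), which gives \eqref{eq: mon} with the explicit constant $\delta=\del{1+Tc/M}^{-1}<1$.

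The main obstacle is upgrading this scalar computation to the matrix inequality, because $\del{I+TAB}^{-1}C$ is not symmetric and the quadratic form of a product of positive matrices is governed by the spectrum of its symmetric part rather than by the eigenvalues of the factors. Carrying the conjugation through, $\del{I+TAB}^{-1}C=A^{1/2}\del{I+TS}^{-1}A^{1/2}\tilde C$ with $\tilde C:=-D^2_{vQ}L$, so the target reduces to bounding $\ip{W\tilde C\xi}{\xi}$ below, where $W:=A^{1/2}\del{I+TS}^{-1}A^{1/2}$ is symmetric with $0<W\le\del{1+Tc/M}^{-1}A$ and the hypothesis reads $\ip{A\tilde C\xi}{\xi}>-\enVert{\xi}^2$. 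When $A$, $B$ and $\tilde C$ are simultaneously diagonalizable---in particular in the scalar Cournot model $L(v,Q)=vP(-v+\varepsilon Q)$ motivating this section---these combine at once and $\delta=\del{1+Tc/M}^{-1}$ works. In the genuinely non-commuting case one must instead control the symmetric part of $W\tilde C$, and I expect this to require more than $A\ge\tfrac1MI$, $B\ge cI$ and $C>-I$ alone; pinning down exactly what extra structure (e.g.\ a bound relating $\enVert{C}$ to the conditioning of $A$, or symmetry of $D^2_{pQ}H$) is needed to retain a uniform $\delta<1$ is the crux of the proof.
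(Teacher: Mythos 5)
Your proposal follows the paper's proof essentially step for step: the paper likewise reduces via the mean value theorem to bounding $DE[\tilde Q]$, reuses equation \eqref{eq: DE} from Theorem \ref{thm: G mon1}, converts $D^2_{vv}L\leq MI$ into $D^2_{pp}H\geq \frac{1}{M}I$, and combines this with $D^2g\geq cI$ and $D^2_{pQ}H>-I$ to obtain exactly your constant $\delta=\del{1+T\frac{c}{M}}^{-1}$. The only difference is that the paper performs this combination in a single line---precisely the scalar computation you carry out---and never addresses the non-commuting matrix issue you flag as the crux. That concern is legitimate: the estimate $\ip{\del{I+TD^2_{pp}H\,D^2g}^{-1}D^2_{pQ}H\,\xi}{\xi}\geq-\del{1+Tc/M}^{-1}\enVert{\xi}^2$ is immediate when $d=1$ or when the relevant Hessians commute, but in general $\del{I+TAB}^{-1}C$ is not symmetric, Assumption \ref{Lassum} controls only the symmetric part (in the quadratic-form sense) of $\del{D^2_{vv}L}^{-1}\del{-D^2_{vQ}L}$ and says nothing about its antisymmetric part, and spectral bounds on the individual factors do not bound the symmetric part of their product; so a uniform $\delta<1$ in $d\geq 2$ would indeed seem to require extra structure of the kind you suggest. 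Since the motivating Cournot examples in Section \ref{sec: Examples} are one-dimensional, both your argument and the paper's are complete in that setting. In short, you have reproduced the paper's proof, with the same constant, and in addition identified a genuine subtlety in the multidimensional case that the paper's own proof passes over silently.
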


\begin{proof}
    We make use of the proof of Theorem \ref{thm: G mon1} through equation \eqref{eq: DE}.
    
    Now since $D^2_{vv}L(v,Q)\leq MI$, $D^2_{pp}H(p,Q)\geq \frac{1}{M}I>0$. We also have that $D^2_{pQ}H(p,Q)>-I$ and $D^2g(x_T)\geq cI>0$, so
    \begin{equation}
        \langle E[Q_1]-E[Q_0],Q_1-Q_0\rangle>\del{1-\frac{1}{1+T\frac{c}{M}}}\enVert{Q_1-Q_0}^2.
    \end{equation}
    Thus \eqref{eq: mon} holds with $\displaystyle\delta=\frac{1}{1+T\frac{c}{M}}$.    
\end{proof}

\subsection{When the Lagrangian depends on $x$}
Now we treat the case where the Lagrangian does depend on $x(t)$. As before our goal is to compute the derivative of $E$. However, because of the $x$-dependence, we cannot explicitly solve for the optimal velocity of the Euler-Lagrange equations. Instead we use the following approach:

Suppose that $y(t),\dot{y}(t):[0,T] \to\mathbb{R}^d$ satisfy the following equations:
\begin{equation}
    y(t)=\int_0^T\mder{x}{Q}\del{t,x_0,Q}(s)\tilde Q(s)\dif s
\end{equation}
and
\begin{equation}
    \dot{y}(t)=\int_0^T\mder{\dot{x}}{Q}\del{t,x_0,Q}(s)\tilde Q(s)\dif s.
\end{equation}

Then letting $X(t)=(x(t),\dot{x}(t),Q(t))$ and taking the derivative with respect to $Q$ of \eqref{E-L}, we have the system:
	\begin{equation} \label{eq:E-L3}
        \begin{split}
        \od{}{t}\sbr{D_{vx}^2 L\del{X(t)}y(t)+D_{vv}^2L(X(t))\dot{y}(t)+D_{vQ}^2L(X(t))\tilde Q(t) }&= D_{xx}^2 L\del{X(t)}y(t)+D_{xv}^2L(X(t))\dot{y}(t)\\
        &+D_{xQ}^2L(X(t))\tilde Q(t),\\ 
		y(0) &= 0,\\
		D_{vx}^2 L\del{X(T)}y(T)+D_{vv}^2L(X(T))\dot{y}(T)+D_{vQ}^2L(X(T))\tilde Q(T)  &= -D^2g(x(T))y(T). 
        \end{split}
	\end{equation}
 Now we seek to show the existence and uniqueness of $x(t)$ which solves \eqref{E-L} then use estimates on $y(t)$ from \eqref{eq:E-L3} to tell us about the derivative of the error map. Before we proceed we make the following assumptions on $L$:
 \begin{assumption}\label{Lassum2}
   The Lagrangian $L:\bb{R}^d\times\bb{R}^d\times\bb{R}^d\to\bb{R}$ is $C^2$ and satisfies: 
   \begin{itemize}
       \item $D_xL(x,v,Q)\not=0$
       \item $D^2_{(x,v)}L\geq c>0$.
       %\item $\enVert{D^2_{vQ}L(X(t))}_{L_\infty}$ and $\enVert{D^2_{xQ}L(X(t))}_{L_\infty}$ are finite.
       \item $c^2>2M^2$, where $M=\max\cbr{\enVert{D_{vQ}^2L(X(t))}_{L_\infty},\enVert{D_{xQ}^2L(X(t))}_{L_\infty}}$.
   \end{itemize}
 \end{assumption}
 \begin{remark}
     Assumption \ref{Lassum2} means that the Lagrangian's dependence on $Q$ does not overpower its convexity with respect to $(x,v)$. This does not necessarily imply small dependence on $Q$, since in some applications $c$ need not be small.
 \end{remark}
 \begin{lemma}\label{thm: well posed}
    If $L$ satisfies Assumption \ref{Lassum2} and $D^2g(x)\geq 0$, there exists a unique $x(t)$ which solves \eqref{E-L}.
    Likewise, given $x(t)$, there exists a unique solution $y(t)$ which solves \eqref{eq:E-L3}.
\end{lemma}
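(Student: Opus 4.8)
The plan is to read both boundary value problems as Euler--Lagrange systems of strictly convex functionals, so that uniqueness comes from convexity and existence from the direct method. For \eqref{E-L}, first I would observe that it is precisely the first-order optimality system, including the natural terminal condition $D_vL(x(T),\dot x(T),Q(T)) = -Dg(x(T))$, for minimizing
\begin{equation*}
	J[x] := \int_0^T L\del{x(t),\dot x(t),Q(t)}\dif t + g(x(T))
\end{equation*}
over $x\in H^1([0,T];\bb{R}^d)$ subject to $x(0)=x_0$. Since $D^2_{(x,v)}L\ge cI>0$ and $D^2g\ge 0$, the functional $J$ is uniformly convex in $x$ (two admissible trajectories sharing the same initial value must differ on a set of positive measure in $\dot x$, so the convexity is strict). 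Hence $J$ has at most one critical point, and by convexity every critical point is the global minimizer; this already gives uniqueness for \eqref{E-L}.

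For existence I would run the direct method. Uniform convexity $D^2_{(x,v)}L\ge cI$ furnishes a quadratic lower bound for $L$ up to affine terms that are square-integrable in $t$, and convexity of $g$ gives an affine lower bound for $g(x(T))$; a Young inequality then yields $J[x]\ge \tfrac{c}{4}\enVert{x}_{H^1}^2 - C$, so $J$ is coercive and any minimizing sequence is bounded in $H^1$. Convexity of $L$ in $(x,v)$ gives weak lower semicontinuity of $\int_0^T L\dif t$ (Tonelli), while the compact embedding $H^1\hookrightarrow C([0,T])$ makes $x\mapsto g(x(T))$ weakly continuous, so a weak $H^1$ limit of a minimizing sequence is a minimizer and solves \eqref{E-L}. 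Regularity is recovered by the standard bootstrap: the integrated equation shows $t\mapsto D_vL(x(t),\dot x(t),Q(t))$ is absolutely continuous, and invertibility of $v\mapsto D_vL$ (from $D^2_{vv}L>0$) lets one solve for $\dot x$ and promote the abstract minimizer to a genuine solution of \eqref{E-L}.

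For the second claim I would treat \eqref{eq:E-L3} as the Euler--Lagrange equation, in the unknown $y$ with $y(0)=0$, of the quadratic functional
\begin{equation*}
	\mathcal Q[y] := \int_0^T\del{\tfrac12\dot y^\top D^2_{vv}L\,\dot y + \dot y^\top D^2_{vx}L\,y + \tfrac12 y^\top D^2_{xx}L\,y + \dot y^\top D^2_{vQ}L\,\tilde Q + y^\top D^2_{xQ}L\,\tilde Q}\dif t + \tfrac12 y(T)^\top D^2g\,y(T),
\end{equation*}
whose coefficients are bounded because they are evaluated along the fixed compact trajectory $X(\cdot)=(x(\cdot),\dot x(\cdot),Q(\cdot))$. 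The purely quadratic part equals $\tfrac12\ip{D^2_{(x,v)}L\,(y,\dot y)}{(y,\dot y)}$, which together with $D^2g\ge 0$ gives coercivity $\mathcal Q[y]\ge \tfrac{c}{2}\enVert{y}_{H^1}^2 - C\enVert{\tilde Q}_{L^2}\enVert{y}_{H^1}$; Lax--Milgram (equivalently the direct method) then produces a unique minimizer $y$, which solves \eqref{eq:E-L3}. Uniqueness can also be seen directly: the difference $w$ of two solutions satisfies the homogeneous system, and pairing the interior equation with $w$, integrating by parts, and inserting $w(0)=0$ together with the terminal condition gives
\begin{equation*}
	\int_0^T\del{\dot w^\top D^2_{vv}L\,\dot w + 2\dot w^\top D^2_{vx}L\,w + w^\top D^2_{xx}L\,w}\dif t + w(T)^\top D^2g\,w(T)=0,
\end{equation*}
whose left side is $\ge c\int_0^T(\abs{w}^2+\abs{\dot w}^2)\dif t$, forcing $w\equiv 0$.

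The hard part will be the existence half of the first claim: converting the pointwise bound $D^2_{(x,v)}L\ge cI$ into genuine $H^1$-coercivity and weak lower semicontinuity of $J$, and then upgrading the abstract $H^1$ minimizer to a function solving \eqref{E-L} in the stated sense. I expect the remaining hypotheses of Assumption \ref{Lassum2}, namely $D_xL\neq 0$ and $c^2>2M^2$, to play no role in well-posedness itself --- they should enter only the later quantitative estimates on $y$ --- so that this lemma rests solely on the uniform convexity of $L$ in $(x,v)$ together with $D^2g\ge 0$.
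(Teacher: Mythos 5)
Your proposal is correct and takes essentially the same route as the paper: the paper's proof simply cites Chapter 6 of Cannarsa--Sinestrari, whose existence and uniqueness results for this Bolza-type problem rest on exactly the direct-method and strict-convexity arguments you spell out (existence of a minimizer of $J$ via coercivity and weak lower semicontinuity, uniqueness of critical points via uniform convexity of $L$ in $(x,v)$ together with $D^2g\geq 0$). Your explicit quadratic-functional/Lax--Milgram treatment of the linearized system \eqref{eq:E-L3} fills in a step the paper dispatches with the single word ``likewise,'' and your homogeneous energy identity for the difference of two solutions is the same computation the paper later performs, with $\tilde Q=0$, in its estimate \eqref{eq: ydot bound}.
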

\begin{proof}
 In order to prove this theorem, we make use of results from Chapter 6 of Cannarsa and Sinestrari's book \cite{cannarsa_semiconcave_2004}. The strict convexity of $L$ and $D^2g(x)\geq 0$ imply the criteria that ensure there exists a solution satisfying the Euler-Lagrange Equations. Additionally the strict convexity of $L$ means that such a solution is unique.
\end{proof}
\begin{proposition}
  Suppose $L$ satisfies Assumption \ref{Lassum2} and $D^2g(x)\geq 0$. If $y(t)$ solves the system \eqref{eq:E-L3} then for $\varepsilon>0$ small enough
  \begin{equation}
      \del{c-2M^2\varepsilon}\int_0^T|\dot{y}(t)|^2\dif t\leq \int_0^T\frac{1}{4\varepsilon}|\tilde{Q}(t)|^2\dif t.
  \end{equation}
\end{proposition}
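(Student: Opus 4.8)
The plan is to run an energy estimate on the linearized system \eqref{eq:E-L3}, testing it against $y$ itself. It is convenient to name the linearized momentum $P(t) := D_{vx}^2 L(X(t))y(t)+D_{vv}^2 L(X(t))\dot y(t)+D_{vQ}^2 L(X(t))\tilde Q(t)$, which is precisely the bracketed quantity differentiated in the first line of \eqref{eq:E-L3}, and to write $R(t)$ for the right-hand side of that line. Then the system reads $\dot P = R$ together with $y(0)=0$ and, by the last line of \eqref{eq:E-L3}, the terminal identity $P(T) = -D^2 g(x(T))\,y(T)$. First I would pair $\dot P = R$ with $y$ and integrate over $[0,T]$. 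Integrating by parts and using $y(0)=0$ to discard the lower endpoint, the boundary contribution collapses to $\ip{P(T)}{y(T)} = -\ip{D^2 g(x(T))\,y(T)}{y(T)}$, which is $\le 0$ because $D^2 g \ge 0$. This yields the identity
\[
\int_0^T \del{\ip{P}{\dot y} + \ip{R}{y}}\dif t = -\ip{D^2 g(x(T))\,y(T)}{y(T)} \le 0.
\]

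The key algebraic step is to identify the integrand. Expanding $\ip{P}{\dot y}+\ip{R}{y}$ and using symmetry of the mixed Hessians, $D_{vx}^2 L = (D_{xv}^2 L)^\top$, the terms free of $\tilde Q$ reassemble into the full convexity quadratic form $\ip{D_{(x,v)}^2 L\, Y}{Y}$ with $Y := (y,\dot y)$, while the remaining terms are exactly the couplings $\ip{D_{vQ}^2 L\,\tilde Q}{\dot y}+\ip{D_{xQ}^2 L\,\tilde Q}{y}$. Substituting into the identity above gives
\[
\int_0^T \ip{D_{(x,v)}^2 L\, Y}{Y}\dif t \le -\int_0^T \del{\ip{D_{vQ}^2 L\,\tilde Q}{\dot y}+\ip{D_{xQ}^2 L\,\tilde Q}{y}}\dif t .
\]
I would then bound the left side below by coercivity, $\ip{D_{(x,v)}^2 L\, Y}{Y}\ge c\,(|y|^2+|\dot y|^2)$, and the right side above by $M|\tilde Q|\,(|y|+|\dot y|)$ using the definition of $M$ in Assumption \ref{Lassum2}. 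Applying Young's inequality in the sharp form $M|\tilde Q|\,|w| \le 2M^2\varepsilon\,|w|^2 + \tfrac{1}{8\varepsilon}|\tilde Q|^2$ to each of $w=y$ and $w=\dot y$ and summing, the cross terms are controlled by $2M^2\varepsilon\,(|y|^2+|\dot y|^2)+\tfrac{1}{4\varepsilon}|\tilde Q|^2$. Absorbing the $|Y|^2$ contribution into the left gives $(c-2M^2\varepsilon)\int_0^T(|y|^2+|\dot y|^2)\dif t \le \tfrac{1}{4\varepsilon}\int_0^T |\tilde Q|^2\dif t$, and discarding the nonnegative term $\int_0^T|y|^2\dif t$ yields the claim, valid for $\varepsilon$ small enough that $c-2M^2\varepsilon>0$.

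The substantive content is not the Young's-inequality bookkeeping but the two structural observations that make it work: choosing $y$ (rather than $\dot y$) as the test function so that integration by parts reconstructs the entire convexity form $\ip{D_{(x,v)}^2 L\,Y}{Y}$, and extracting a favorable sign from the boundary term via $y(0)=0$ together with $D^2 g\ge 0$. The main point requiring care is verifying the symmetry-of-Hessians cancellation that fuses the four position/velocity terms into a single coercive quadratic form, together with the tuning of the Young parameter to land exactly on the constants $2M^2\varepsilon$ and $\tfrac{1}{4\varepsilon}$; the coercivity constant $c$ and the definition of $M$ then deliver the estimate directly, with the hypothesis $c^2>2M^2$ reserved for the subsequent step where $\varepsilon$ is optimized to extract monotonicity.
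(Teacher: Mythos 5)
Your proof is correct and follows essentially the same route as the paper: test the linearized Euler--Lagrange equation against $y$, integrate by parts using $y(0)=0$ and the terminal condition to extract the sign-favorable term $-y(T)^T D^2g(x(T))y(T)$, bound the resulting quadratic form below by $c\int_0^T(|y|^2+|\dot y|^2)\dif t$ via uniform convexity, and control the $\tilde Q$-coupling terms by Young's inequality, arriving at exactly the constants $2M^2\varepsilon$ and $\tfrac{1}{4\varepsilon}$. The only cosmetic differences are that you name the linearized momentum $P$ and apply Young separately to the $y$ and $\dot y$ couplings, where the paper applies it once to the combined term before splitting via $|\dot y^T D_{vQ}^2 L + y^T D_{xQ}^2 L|^2 \le 2M^2(|y|^2+|\dot y|^2)$.
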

\begin{proof} By multiplying the first equation of system \eqref{eq:E-L3} by $y(t)^T$ on the left, integrating with respect to $t$, and applying the boundary conditions we get the following equation:
   \begin{equation}
    \begin{split}
    -y(T)^TD^2g(x(T))y(T)-\int_0^T\sbr{\dot{y}(t)^TD_{vx}^2L(X(t))y(t)+\dot{y}(t)^TD_{vv}^2L(X(t))\dot{y}(t)+\dot{y}(t)^TD_{vQ}^2L(X(t))\tilde Q(t)}\dif t   \\
    =\int_0^T\sbr{y(t)^TD_{xx}^2L(X(t))y(t)+y(t)^TD_{xv}^2L(X(t))\dot{y}(t)+y(t)^TD_{xQ}^2L(X(t))\tilde Q(t)}\dif t
    \end{split}
    \end{equation}
Now by rearranging we have
\begin{equation}
    \begin{split}
        \int_0^T\sbr{y(t)^TD_{xx}^2L(X(t))y(t)+y(t)^TD_{xv}^2L(X(t))\dot{y}(t)+\dot{y}(t)^TD_{vx}^2L(X(t))y(t)+\dot{y}(t)^TD_{vv}^2L(X(t))\dot{y}(t)}\dif t\\
        +y(T)^TD^2g(x(T))y(T)=-\int_0^T\sbr{\dot{y}(t)^TD_{vQ}^2L(X(t))\tilde Q(t)+y(t)^TD_{xQ}^2L(X(t))\tilde Q(t)}\dif t.
    \end{split}
\end{equation}
Since $L$ is uniformly convex in $x$ and $v$, 
\begin{equation}
    \begin{split}
       \int_0^T\sbr{y(t)^TD_{xx}^2L(X(t))y(t)+y(t)^TD_{xv}^2L(X(t))\dot{y}(t)+\dot{y}(t)^TD_{vx}^2L(X(t))y(t)+\dot{y}(t)^TD_{vv}^2L(X(t))\dot{y}(t)}\dif t\\
       \geq c\int_0^T\del{\abs{\dot{y}(t)}^2+\abs{y(t)}^2}\dif t.
    \end{split}
\end{equation}
Also, note 
\begin{equation}
    \begin{split}
        &-\int_0^T\sbr{\dot{y}(t)^TD_{vQ}^2L(X(t))\tilde Q(t)+y(t)^TD_{xQ}^2L(X(t))\tilde Q(t)}\dif t\\
        &\leq \int_0^T\abs{\del{\dot{y}(t)^TD_{vQ}^2L(X(t))+y(t)^TD_{xQ}^2L(X(t))}\tilde Q(t)}\dif t\\
        &\leq \int_0^T \sbr{\varepsilon\abs{\dot{y}(t)^TD_{vQ}^2L(X(t))+y(t)^TD_{xQ}^2L(X(t))}^2+\frac{1}{4\varepsilon}\abs{\tilde Q(t)}^2}\dif t.
    \end{split}
\end{equation}
Putting the inequalities together, 
\begin{equation}
\begin{split}
    c\int_0^T\del{\abs{\dot{y}(t)}^2+\abs{y(t)}^2}\dif t&+ y(T)^TD^2g(x(T))y(T)\\
   &\leq \int_0^T \sbr{\varepsilon\abs{\dot{y}(t)^TD_{vQ}^2L(X(t))+y(t)^TD_{xQ}^2L(X(t))}^2+\frac{1}{4\varepsilon}\abs{\tilde Q(t)}^2}\dif t. 
\end{split}
\end{equation}
Since $M=\max\cbr{\enVert{D_{vQ}^2L(X(t))}_{L_\infty},\enVert{D_{xQ}^2L(X(t))}_{L_\infty}}<\infty$, there exists $\varepsilon>0$ such that $2M^2\varepsilon<c$. The inequality becomes
\begin{equation}\label{eq: ydot bound}
   \del{c-2M^2\varepsilon}\int_0^T\del{\abs{\dot{y}(t)}^2+\abs{y(t)}^2}\dif t+ y(T)^TD^2g(x(T))y(T)\leq \int_0^T\frac{1}{4\varepsilon}\abs{\tilde Q(t)}^2\dif t
\end{equation} 
\end{proof}

Now we can use the estimates on $\dot{y}(t)$ to prove the monotonicity of the error map.
\begin{theorem}\label{thm: G mon}
    If $L$ satisfies Assumption \ref{Lassum2} and $D^2g\geq 0$, then $E$ is monotone in the sense that for all $Q_0(t),Q_1(t)\in L^2([0,T],\bb{R}^d)$,
    \begin{equation}
        \langle E[Q_1]-E[Q_0], Q_1-Q_0\rangle\geq (1-\delta)\enVert{Q_1-Q_0}^2
    \end{equation}
    for some $\delta<1$.   
\end{theorem}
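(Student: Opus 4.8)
The plan is to follow the template of Theorem \ref{thm: G mon1}, but since we now work in $L^2([0,T],\bb R^d)$ rather than in $\bb R^d$, I would replace the finite-dimensional mean value theorem by the fundamental theorem of calculus along the segment $Q_s := Q_0 + s(Q_1-Q_0)$, $s\in[0,1]$, and then control the resulting cross term using the a priori bound of the preceding Proposition. Writing $G[Q](t) := \int_{\bb R^d}\dot x(t;x_0,Q)\dif m_0(x_0)$ so that $E[Q]=Q+G[Q]$, the first step is the expansion
\begin{equation*}
	\langle E[Q_1]-E[Q_0],Q_1-Q_0\rangle = \enVert{Q_1-Q_0}^2 + \langle G[Q_1]-G[Q_0],Q_1-Q_0\rangle,
\end{equation*}
which reduces everything to bounding the second term from below.

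For the cross term, set $\tilde Q := Q_1-Q_0$ and let $y_s$ be the solution of the linearized system \eqref{eq:E-L3} with base trajectory $X_s(t)=(x(t;x_0,Q_s),\dot x(t;x_0,Q_s),Q_s(t))$ and source $\tilde Q$; Lemma \ref{thm: well posed} guarantees that $y_s$ exists, is unique, and satisfies $\dot y_s(t)=\mder{\dot x}{Q}(t;x_0,Q_s)[\tilde Q]$. Using this differentiability I would write, for each fixed $x_0$, the identity $\dot x(\cdot\,;x_0,Q_1)-\dot x(\cdot\,;x_0,Q_0)=\int_0^1\dot y_s\dif s$, and then, after Fubini's theorem,
\begin{equation*}
	\langle G[Q_1]-G[Q_0],Q_1-Q_0\rangle = \int_0^1\int_{\bb R^d}\int_0^T \dot y_s(t)\cdot\tilde Q(t)\dif t\dif m_0(x_0)\dif s.
\end{equation*}

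The heart of the argument is a uniform bound on $\dot y_s$. Discarding the nonnegative boundary term $y(T)^T D^2g(x(T))y(T)$ in \eqref{eq: ydot bound} (licit because $D^2g\geq 0$) gives $\enVert{\dot y_s}_{L^2}^2 \leq \frac{1}{4\varepsilon(c-2M^2\varepsilon)}\enVert{\tilde Q}_{L^2}^2$ for every $0<\varepsilon<c/(2M^2)$, and the bound is uniform in $s$ because the constants $c$ and $M$ of Assumption \ref{Lassum2} do not depend on the base point. Optimizing over $\varepsilon$ — the maximum of $4\varepsilon(c-2M^2\varepsilon)$ is attained at $\varepsilon=c/(4M^2)$ and equals $c^2/(2M^2)$ — yields $\enVert{\dot y_s}_{L^2}\leq \frac{\sqrt 2 M}{c}\enVert{\tilde Q}_{L^2}$. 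Since $m_0$ is a probability measure, Cauchy--Schwarz then gives $\abs{\langle G[Q_1]-G[Q_0],Q_1-Q_0\rangle}\leq \frac{\sqrt 2 M}{c}\enVert{\tilde Q}_{L^2}^2$, and combining with the expansion above produces
\begin{equation*}
	\langle E[Q_1]-E[Q_0],Q_1-Q_0\rangle \geq \del{1-\tfrac{\sqrt 2 M}{c}}\enVert{Q_1-Q_0}^2,
\end{equation*}
which is the claim with $\delta=\sqrt 2 M/c$; note that $\delta<1$ is precisely the hypothesis $c^2>2M^2$ of Assumption \ref{Lassum2}.

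I expect the main obstacle to lie in the rigor of the differentiation step rather than in the estimates: one must justify that $Q\mapsto \dot x(\cdot\,;x_0,Q)$ is Gateaux differentiable with derivative governed by \eqref{eq:E-L3}, that $s\mapsto \dot x(\cdot\,;x_0,Q_s)$ is absolutely continuous so the fundamental theorem of calculus applies, and that the bound is uniform in $s$ and integrable in $x_0$ so that the interchange of integrals is valid. The well-posedness of both \eqref{E-L} and \eqref{eq:E-L3} from Lemma \ref{thm: well posed}, together with the $L^\infty$ bounds on the mixed Hessians built into Assumption \ref{Lassum2}, should furnish exactly what is needed.
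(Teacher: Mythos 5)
Your proposal is correct and follows essentially the same route as the paper: the same decomposition $E[Q]=Q+G[Q]$, linearization along the segment $Q_s$ via the system \eqref{eq:E-L3}, and the a priori bound \eqref{eq: ydot bound} with the nonnegative boundary term discarded since $D^2g\geq 0$. The only difference is the final step, where you apply Cauchy--Schwarz with the optimized $\varepsilon=c/(4M^2)$ to obtain $\delta=\sqrt{2}M/c$, while the paper uses Young's inequality and gets $\delta=\frac{1}{2}\del{1+2M^2/c^2}$; by AM--GM your constant is in fact slightly sharper, and both are strictly less than $1$ precisely under the hypothesis $c^2>2M^2$ of Assumption \ref{Lassum2}.
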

\begin{proof}
\begin{equation}
        \int_0^T\del{E[Q_1](t)-E[Q_0](t)}\cdot\del{Q_1(t)-Q_0(t)}\dif t=\int_0^T\int_0^1DE[Q_\lambda](t)\del{Q_1(t)-Q_0(t)}\dif\lambda\cdot\del{Q_1(t)-Q_0(t)}\dif t
\end{equation}
where $Q_\lambda=\lambda Q_1+(1-\lambda)Q_0$. The integral becomes
\begin{equation}
    \begin{split}
        &=\enVert{Q_1-Q_0}^2+\int_0^T\int_0^1\int_0^T\mder{\dot{x}}{Q}\del{t,x_0,Q_\lambda}(s)\del{Q_1(s)-Q_0(s)} \dif s\dif\lambda\cdot\del{Q_1(t)-Q_0(t)}\dif t\\
        &=\enVert{Q_1-Q_0}^2+\int_0^T\int_0^1\dot{y}(t,x_0,Q_\lambda)\dif\lambda\cdot\del{Q_1(t)-Q_0(t)}\dif t\\
        &\geq \enVert{Q_1-Q_0}^2-\frac{1}{2}\int_0^1\int_0^T\sbr{\abs{\dot{y}(t)}^2+\abs{Q_1(t)-Q_0(t)}^2}\dif t\dif \lambda\\
        &\geq\enVert{Q_1-Q_0}^2-\frac{1}{2}\int_0^1\int_0^T\del{1+c^*}\abs{Q_1(t)-Q_0(t)}^2\dif t\dif \lambda\\
        &=\del{1-\frac{1}{2}\del{1+c^*}}\enVert{Q_1-Q_0}^2
    \end{split}
\end{equation}
So $E$ is monotone if $c^*<1$, where $c^*=\frac{1}{4\varepsilon\del{c-2M^2\varepsilon}}$ from \eqref{eq: ydot bound}. By Assumption \ref{Lassum2}, $c^2>2M^2$, so by taking $\varepsilon = \frac{c}{4M^2}$ we get $c^* = \frac{2M^2}{c^2} < 1$.
\end{proof}
\subsection{Existence and Uniqueness}
The monotonicity of the error map, combined with standard results on uniqueness for Hamilton-Jacobi equations and continuity equations (see, for instance, \cite[Section 4]{cardaliaguet_notes_2013}), immediately implies uniqueness of solutions to the mean field game of controls \eqref{mfg}.
For completeness, in this section we collect two results that show the system is well-posed.
\begin{theorem}
    Suppose the Lagrangian $L:\bb{R}^d\times\bb{R}^d\to \bb{R}$ is $C^2$ and satisfies Assumption \ref{Lassum}, $D^2_{vv}L(v,Q)\leq MI$ for some $M<\infty$, and $D^2g\geq cI$ for some $c>0$. Then there exists a unique solution to system \eqref{mfg}.
\end{theorem}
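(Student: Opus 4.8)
The plan is to split the claim into a uniqueness part and an existence part, and in both to reduce the PDE system \eqref{mfg} to the finite-dimensional fixed-point equation $E[Q]=0$. Under Assumption \ref{Lassum} the Lagrangian is independent of $x$, so by Lemma \ref{lem:Q const} any solution necessarily has $Q(t)\equiv Q$ constant, and the equilibrium condition \eqref{Q} is equivalent to \eqref{Q xdot}, that is, to $E[Q]=0$ with $E$ regarded as a map $\bb{R}^d\to\bb{R}^d$. The bridge between \eqref{mfg} and this algebraic equation is the standard verification argument: for a given constant $Q$, the Hamilton--Jacobi equation \eqref{hj} with terminal datum $g$ has a unique value function $u$, the optimal feedback $\dot x = D_pH(x,D_xu,Q)$ coincides with the Euler--Lagrange trajectories of \eqref{E-L}, and the solution $m$ of the continuity equation \eqref{fp} is the push-forward of $m_0$ along this flow, so that \eqref{Q} is exactly the average of $-\dot x(\cdot;x_0,Q)$ against $m_0$.

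For uniqueness I would argue that if two triples solve \eqref{mfg}, then each yields a constant $Q_i$ with $E[Q_i]=0$. The strong monotonicity estimate established above under these very hypotheses, with $\delta=\tfrac{1}{1+Tc/M}<1$, gives $\langle E[Q_1]-E[Q_0],Q_1-Q_0\rangle\geq(1-\delta)\enVert{Q_1-Q_0}^2$, so $E[Q_0]=E[Q_1]=0$ forces $Q_0=Q_1$. Once $Q$ is fixed, \eqref{hj} has a unique solution $u$ by comparison, and \eqref{fp} then has a unique solution $m$ (see \cite[Section 4]{cardaliaguet_notes_2013}); hence the whole triple is determined.

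For existence I would solve $E[Q]=0$ by monotone-operator theory. By Lemma \ref{lem: D_Qx_T} the map $Q\mapsto x_T(x_0,Q)$ is $C^1$, so $E$ is continuous (indeed $C^1$), and the strong monotonicity estimate shows it is coercive, since $\langle E[Q]-E[0],Q\rangle\geq(1-\delta)\enVert{Q}^2$ forces $\enVert{E[Q]}\to\infty$ as $\enVert{Q}\to\infty$. A continuous, coercive, strictly monotone map on $\bb{R}^d$ is surjective (Browder--Minty; alternatively, since the symmetric part of $DE$ is bounded below by $(1-\delta)I$ the map is a local diffeomorphism, and coercivity makes it proper, so Hadamard's global inverse theorem gives a diffeomorphism), whence there is a unique constant $Q$ with $E[Q]=0$. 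Reconstructing $u$ from \eqref{hj} and $m$ from \eqref{fp} as in the verification argument then produces a solution of \eqref{mfg}, the condition \eqref{Q} holding by construction.

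The main obstacle is the verification step that links the PDE system to $E[Q]=0$ on the unbounded domain $\bb{R}^d$: one must secure enough regularity of the value function $u$ (semiconcavity and differentiability along optimal trajectories, via the convexity of $L$ and the techniques of \cite{cannarsa_semiconcave_2004}) for the optimal feedback to be well defined and the push-forward representation of $m$ to be valid. Controlling the behaviour at infinity ---integrability of $m_0$ and growth of $Dg$--- so that the averages defining \eqref{Q} and $E$ are finite and depend continuously on $Q$ is the other delicate ingredient; everything else reduces to the monotonicity already established.
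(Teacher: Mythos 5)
Your proposal is correct and follows essentially the same route as the paper: reduce \eqref{mfg} to the finite-dimensional equation $E[Q]=0$ for constant $Q$ (via Lemma \ref{lem:Q const}), invoke the strong monotonicity estimate with $\delta=\frac{1}{1+Tc/M}$ to get strict monotonicity and coercivity, apply Minty--Browder for existence and uniqueness of the fixed point, and recover $(u,m)$ from standard well-posedness results for the Hamilton--Jacobi and continuity equations. Your write-up is in fact more explicit than the paper's about the verification step linking the PDE system to $E[Q]=0$ and about why $Q$ must be constant, but these are elaborations of the same argument, not a different one.
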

\begin{proof}
    By Theorem \ref{thm: G mon1}, the error map $E:\bb{R}^d\to\bb{R}^d$ as defined in \eqref{error map} satisfies the condition that for all $Q_1,Q_0\in\bb{R}^d$
    \begin{equation}
         \langle E[Q_1]-E[Q_0],Q_1-Q_0\rangle>\del{1-\delta}\enVert{Q_1-Q_0}^2,
    \end{equation}
    where $\delta<1$. Notice this implies $E$ is strictly monotone and coercive. $E$ is also bounded and continuous. Applying Minty-Browder theorem yields existence and uniqueness of a fixed point $Q\in\bb{R}^d$ of \eqref{Q xdot}. Then by standard results there exists a unique solution to to system \eqref{mfg}. In particular, the Hamilton-Jacobi equation has a classical solution and the continuity equation has a solution in the sense of distributions.
\end{proof}

\begin{theorem}
    Suppose the Lagrangian $L:\bb{R}^d\times\bb{R}^d\times \bb{R}^d\to \bb{R}$ is $C^2$ and satisfies Assumption \ref{Lassum2} and $D^2g\geq 0$. Then there exists a unique solution to system \eqref{mfg}.
\end{theorem}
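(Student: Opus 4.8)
The plan is to follow the blueprint of the preceding ($x$-independent) theorem, but now in the infinite-dimensional Hilbert space $\s H := L^2([0,T],\bb{R}^d)$. By \eqref{Q xdot}--\eqref{error map}, solving the equilibrium condition \eqref{Q} amounts to finding a zero of the error map $E$, and Theorem \ref{thm: G mon} already supplies the crucial estimate
\[
\langle E[Q_1]-E[Q_0], Q_1-Q_0\rangle \geq (1-\delta)\enVert{Q_1-Q_0}^2, \qquad \delta<1,
\]
so $E$ is strictly monotone and coercive on $\s H$. First I would record that a zero $Q^\ast$ of $E$ is automatically unique: if $E[Q_1]=E[Q_0]=0$ then the left-hand side above vanishes, forcing $Q_1=Q_0$. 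This already delivers the uniqueness half of the statement, modulo reconstructing $(u,m)$ from $Q^\ast$.

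For existence I would invoke the Minty-Browder theorem, exactly as in the constant case, which requires checking that $E$ is, in addition, bounded and (hemi)continuous from $\s H$ into $\s H$. Both properties follow from the well-posedness and stability of the Euler-Lagrange system. Lemma \ref{thm: well posed} makes the map $Q \mapsto (x(\cdot;x_0,Q),\dot x(\cdot;x_0,Q))$ well-defined, and the linearized system \eqref{eq:E-L3} together with the estimate \eqref{eq: ydot bound} from the preceding Proposition shows that the G\^{a}teaux derivative $DE[Q]$ is a bounded linear operator on $\s H$ whose norm is controlled uniformly in $Q$ by a constant depending only on $c$, $M$, and $T$. Integrating this bound along the segment $Q_\lambda = \lambda Q_1 + (1-\lambda)Q_0$, as in the proof of Theorem \ref{thm: G mon}, yields a global Lipschitz estimate $\enVert{E[Q_1]-E[Q_0]} \leq C\enVert{Q_1-Q_0}$, which supplies continuity and boundedness at once. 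With monotonicity, coercivity, boundedness, and continuity in hand, Minty-Browder produces a $Q^\ast \in \s H$ with $E[Q^\ast]=0$, i.e.\ a solution of \eqref{Q xdot}.

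It then remains to assemble a genuine solution $(u,m,Q^\ast)$ of \eqref{mfg} from the fixed point. Given $Q^\ast$, I would solve the Hamilton-Jacobi equation \eqref{hj} backward from the terminal datum $u(\cdot,T)=g$; under Assumption \ref{Lassum2} with $D^2 g\geq 0$ the Hamiltonian is regular enough that the standard theory (e.g.\ Cannarsa-Sinestrari \cite{cannarsa_semiconcave_2004}) furnishes a solution $u$ whose optimal feedback $v=D_pH(x,D_xu,Q^\ast)$ coincides with the Euler-Lagrange velocity $\dot x(t;x_0,Q^\ast)$. Pushing $m_0$ forward along this velocity field solves the continuity equation \eqref{fp} in the sense of distributions, and the defining relation $E[Q^\ast]=0$ is precisely the market-clearing condition \eqref{Q}. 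Uniqueness of the full system then reduces to uniqueness of $Q^\ast$, already established, combined with standard uniqueness results for \eqref{hj} and \eqref{fp} (see \cite[Section 4]{cardaliaguet_notes_2013}).

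The main obstacle is the continuity and boundedness of $E$ on $L^2$, i.e.\ the stability of the nonlinear two-point boundary-value problem \eqref{E-L} under $L^2$ perturbations of the \emph{entire} trajectory $Q$. Because the dependence of $x(\cdot;x_0,Q)$ on $Q$ is nonlocal and the Euler-Lagrange system is of forward-backward type, one cannot merely integrate an initial-value ODE; the velocity $\dot x$ and the terminal point $x(T)$ must be controlled through the linearized estimate \eqref{eq: ydot bound}, uniformly in $Q$ and for $m_0$-a.e.\ $x_0$, and these bounds must survive integration against $m_0$. Relative to the constant-$Q$ case, where $E$ acts on $\bb{R}^d$ and these points are immediate, this is where the real work lies; the final passage from $Q^\ast$ to a distributional solution of \eqref{fp} whose velocity field matches $D_pH$ also requires reconciling the regularity of $u$ with the transport of $m_0$.
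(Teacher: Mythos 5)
Your proposal follows essentially the same route as the paper: the paper's proof likewise invokes Theorem \ref{thm: G mon} for strict monotonicity and coercivity of $E$ on $L^2([0,T],\bb{R}^d)$, asserts boundedness and continuity, applies the Minty--Browder theorem to obtain a unique zero of $E$, and then cites standard results (\cite{cardaliaguet_notes_2013}) to reconstruct the unique solution $(u,m)$ of \eqref{mfg}. The only difference is that you sketch how to verify the boundedness and continuity of $E$ (via the Lipschitz bound obtained from \eqref{eq: ydot bound} along the segment $Q_\lambda$), a step the paper states without proof, so your write-up is, if anything, slightly more complete.
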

\begin{proof}
    By Theorem \ref{thm: G mon}, the error map $E$ as defined in \eqref{error map} satisfies the condition that for all $Q_0(t),Q_1(t)\in L^2([0,T],\bb{R}^d)$,
    \begin{equation}
        \langle E[Q_1]-E[Q_0], Q_1-Q_0\rangle\geq (1-\delta)\enVert{Q_1-Q_0}^2
    \end{equation}
    for some $\delta<1$. This implies $E$ is stictly monotone and coercive. In addition, $E$ is bounded and continuous. Applying Minty-Browder theorem yields existence and uniqueness of a fixed point $Q(t)\in L^2([0,T],\bb{R}^d)$ of \eqref{Q xdot}. Then, as in the previous argument, by standard results there exists a unique solution to to system \eqref{mfg} \cite{cardaliaguet_notes_2013}.
\end{proof}

\section{Examples and Counterexamples}\label{sec: Examples}
We use this section to emphasize the dichotomy between the monotonicity condition used in this paper with the Lasry-Lions condition and even displacement monotonicity. Due to the simplicity of this first example, we are able to explicitly solve for the optimal trajectory and then the fixed point $Q(t)$. 
\begin{example}
    Suppose $L(\dot{x},Q)=\ell(-\dot{x}+\varepsilon Q)$, $D^2\ell> 0$, and $D^2g\geq 0$, then $Q(t)$ which solves
    \begin{equation}
        Q(t) = \int_{\bb{R}^d} -\dot{x}(t;x_0,Q)\dif m_0(x_0)
    \end{equation}
    is unique.
\end{example}

    The Euler Lagrange equations for this case are:
    \begin{align}
        \od{}{t}\sbr{-D\ell(-\dot{x}(t)+\varepsilon Q(t))}&=0,\\
        x(0)&=x_0,\\
        -D\ell(-\dot{x}(T)+\varepsilon Q(T))&=-Dg(x(T)).
    \end{align}
    This implies 
    \begin{equation}
        -D\ell(-\dot{x}(t)+\varepsilon Q(t))=-Dg(x(T)).
    \end{equation}
    Then since $D^2\ell> 0$, $D\ell$ is invertible, so
    \begin{equation}
       -\dot{x}(t)+\varepsilon Q(t)=D\ell^{-1}(Dg(x(T))).
    \end{equation}
    To solve for $x(T)$, take the integral over $\sbr{0,T}$ to get
    \begin{equation}
        \del{I+TD\ell^{-1}\circ Dg}(x(T))=\int_0^T\varepsilon Q(s)\dif s+x_0.
    \end{equation}
    So
    \begin{equation}
        x(T)=\del{I+TD\ell^{-1}\circ Dg}^{-1}\del{\int_0^T\varepsilon Q(s)\dif s+x_0},
    \end{equation}
    where invertibility comes from $D^2\ell,D^2g\geq 0$. Now
    \begin{equation}
        -\dot{x}(t)=-\varepsilon Q(t)+D\ell^{-1}\circ Dg \circ \del{I+TD\ell^{-1}\circ Dg}^{-1}\del{\int_0^T\varepsilon Q(s)\dif s+x_0}.
    \end{equation}
    Next we solve for $Q$ such that 
    \begin{equation}
        Q(t) = \int_{\bb{R}^d} \sbr{-\varepsilon Q(t)+D\ell^{-1}\circ Dg \circ \del{I+TD\ell^{-1}\circ Dg}^{-1}\del{\int_0^T\varepsilon Q(s)\dif s+x_0}}\dif m_0(x_0).
    \end{equation}
    We see
    \begin{equation}
        (1+\varepsilon)Q(t) = \int_{\bb{R}^d} \sbr{D\ell^{-1}\circ Dg \circ \del{I+TD\ell^{-1}\circ Dg}^{-1}\del{\int_0^T\varepsilon Q(s)\dif s+x_0}}\dif m_0(x_0). 
    \end{equation}
    Note that the right side of the equation has no dependence on $t$, therefore $Q(t)$ must be constant. Applying this fact and rearranging we see
    \begin{equation}
    \begin{split}
      0 = \int_{\bb{R}^d} \sbr{D\ell^{-1}\circ Dg \circ \del{I+TD\ell^{-1}\circ Dg}^{-1}\del{T\varepsilon Q+x_0}-(1+\varepsilon)Q}\dif m_0(x_0)\\
      =\int_{\bb{R}^d} \sbr{-\frac{1}{T\varepsilon}\del{(1+\varepsilon)I+TD\ell^{-1}\circ Dg}\circ\del{I+TD\ell^{-1}\circ Dg}^{-1}\del{T\varepsilon Q+x_0}+\frac{1+\varepsilon}{T\varepsilon}x_0}\dif m_0(x_0)
    \end{split}
    \end{equation}
    Note $F:=\del{(1+\varepsilon)I+TD\ell^{-1}\circ Dg}\circ\del{I+TD\ell^{-1}\circ Dg}^{-1}$ is continuous, strictly monotone and coercive, so by the Minty-Browder theorem our solution is unique and we get
    \begin{equation}
        Q=\int_{\mathbb{R}^d}\frac{1}{T\varepsilon}\del{\del{I+TD\ell^{-1}\circ Dg}\circ\del{(1+\varepsilon)I+TD\ell^{-1}\circ Dg}^{-1}\del{(1+\varepsilon)x_0}-x_0}\dif m_0(x_0)
    \end{equation}
\begin{remark}
    For a counter-example showing $L(\dot{x},Q)=\ell\del{-\dot{x}+\varepsilon Q}$ is not in general Lasry-Lions monotone under the same assumptions, see Section (3.1) of \cite{graber_monotonicity_2023}.
\end{remark}
The next example uses the Cournot model. Previous papers such as \cite{graber_master_2023} have required $\varepsilon$ to be very small to guarantee uniqueness of solutions, but we show monotonicity of $E$ which give uniqueness without requiring $\varepsilon$ to be so small. 
 \begin{example}\label{ex:Cournot}
    Let  $s\in\del{-1,0}$, and let $P$ be given by
    $$P'(q)=-q^s.$$ 
    The Lagrangian $L(q,Q)=-qP(q+\varepsilon Q)$ is not Lasry-Lions monotone or displacement monotone. However, the error map $E$ from \eqref{error map} is monotone provided $\varepsilon < 2$.
\end{example}

To see that $E$ is monotone, first note the following:
\begin{equation}
	P'(q) < 0, \ P''(q) \geq 0, \ P'(q) + qP''(q)  < 0.
\end{equation}
We now see that $L$ is strictly convex in $q$: 
\begin{equation}
\begin{split}
    D^2_{qq}L(q,Q)&=-2P'(q+\varepsilon Q)-qP''(q+\varepsilon Q)\\
    &\geq -2P'(q+\varepsilon Q)-(q+\varepsilon Q)P''(q+\varepsilon Q)\\
    &>0.
\end{split}
\end{equation}
Next, we show $D^2_{pQ}H(p,Q)>-1$. Setting $q = D_p H(p,Q)$ we have
\begin{equation}
\begin{split}
D^2_{pQ}H(p,Q)&=-\frac{D_{qQ}^2 L(q,Q)}{D_{qq}^2 L(q,Q)}\\
&=-\frac{\varepsilon\del{-P'(q+\varepsilon Q)-qP''(q+\varepsilon Q)}}{-2P'(q+\varepsilon Q)-qP''(q+\varepsilon Q)}\\
&\geq -\frac{\varepsilon\del{-P'(q+\varepsilon Q)-qP''(q+\varepsilon Q)}}{2\del{-P'(q+\varepsilon Q)-qP''(q+\varepsilon Q)}}\\
&=-\frac{\varepsilon}{2}>-1.
\end{split}
\end{equation}
Now we can say by Theorem \ref{thm: G mon1}, 
    \begin{equation}
        \langle E[Q_1]-E[Q_0], Q_1-Q_0\rangle>0.
    \end{equation}

To see that $L$ is not Lasry-Lions Monotone, we show that the sign of 
\begin{equation}\label{eq:notLL}
    -\int_{q\geq 0}q\del{P\del{q+\varepsilon\int\beta\dif\mu_1(\beta)}-P\del{q+\varepsilon\int\beta\dif\mu_2(\beta)}}\dif \del{\mu_1-\mu_2}(q)
\end{equation}
changes depending on choice of $\mu_1,\mu_2$. We see \eqref{eq:notLL} becomes
\begin{equation}
    -M\int\int_0^1qP'\del{q+M_\lambda}\dif \lambda\dif\del{\mu_1-\mu_2}(q)
\end{equation}
where $M=\varepsilon\int\beta\dif\del{\mu_1-\mu_2}(\beta)$ and $M_\lambda=\varepsilon\int\beta\dif\del{\lambda\mu_1-(1-\lambda)\mu_2}(\beta)$. Now define
\begin{equation}
    F(q)=-M\int_0^1qP'\del{q+M_\lambda}\dif\lambda.
\end{equation}
Suppose $\mu_1=c_1\delta_{q_1}+(1-c_1)\delta_{\overline{q_1}}$ and $\mu_2=\delta_{q_2}$. Then \eqref{eq:notLL} becomes
\begin{equation}
    c_1F(q_1)+(1-c_1)F(\overline{q_1})-F(q_2).
\end{equation}
We see that for our choice of $P$, and $M>0$, $F$ is increasing and concave for $0<q$. Then choose $0<q_1<q_2<\overline{q_1}$ so $q_2=tq_1+(1-t)\overline{q_1}$ for some $t\in\del{0,1}$. Since $F$ is concave,
\begin{equation}
   F(q_2)>tF(q_1)+(1-t)F(\overline{q_1}). 
\end{equation}
Then for $c_1\in\del{0,t}$ 
\begin{equation}
    \begin{split}
        c_1q_1+(1-c_1)\overline{q_1}&=\overline{q_1}-c_1(\overline{q_1}-q_1)\\
        &>\overline{q_1}-t(\overline{q_1}-q_1)=q_2
    \end{split}
\end{equation}
so that $M=\varepsilon\del{c_1q_1+(1-c_1)\overline{q_1}-q_2}>0$. Now for choice of $c_1$ close enough to $t$ we get 
\begin{equation}
    c_1F(q_1)+(1-c_1)F(\overline{q_1})-F(q_2)<0.
\end{equation}
To see that \eqref{eq:notLL} could be positive, take $\mu_1=\delta_{q_1}$ and $\mu_2=\delta_{q_2}$ where $0<q_2<q_1$ so that $M>0$, then since $F$ is increasing $F(q_1)-F(q_2)>0$. 

By a similar argument we can see that $L$ is not displacement monotone. Let $\xi^1,\xi^2$ be random variables with $\mathcal{L}_{\xi^i}=\mu_i$. Supposing $Q_i=\mathbb{E}[\xi^i]=\int \xi\dif \mu_i(\xi)$, we aim to show that the sign of 
\begin{equation}\label{eq:notD}
    \mathbb{E}\sbr{\del{D_qL(\xi^1,Q_1)-D_q(\xi^2,Q_2)}\del{\xi^1-\xi^2}}
\end{equation}
changes depending on choice of $\mu_1$ and $\mu_2$. Now \eqref{eq:notD} becomes
\begin{equation}
    \mathbb{E}\sbr{\del{-\xi^1P'(\xi^1+\varepsilon Q_1)-P(\xi^1+\varepsilon Q_1)+\xi^2P'(\xi^2+\varepsilon Q_2)+P(\xi^2+\varepsilon Q_2)}\del{\xi^1-\xi^2}}
\end{equation}
Suppose for some $q_1,\overline{q_1},q_2\in\mathbb{R}$, $\mu_1=\lambda\delta_{q_1}+(1-\lambda)\delta_{\overline{q_1}}$ such that $\lambda\in(0,1)$, and $\mu_2=\delta_{q_2}$. Then define 
\begin{equation}
    F(x,y):=-xP'(x+\varepsilon y)-P(x+\varepsilon y).
\end{equation}
Then \eqref{eq:notD} becomes
\begin{equation}
    \del{\lambda F(q_1,q_\lambda)+(1-\lambda)F(\overline{q_1},q_\lambda)-F(q_2,q_2)}(q_\lambda-q_2)
\end{equation}
where $q_\lambda=\lambda q_1+(1-\lambda)\overline{q_1}$. Note $F$ is increasing and strictly concave with respect to the first variable and continuous with respect to the second variable. Fix $t\in(0,1)$, and choose $0<q_1<q_2<\overline{q_1}<R$ so that $q_2=tq_1+(1-t)\overline{q_2}$. Since $F$ is strictly concave with respect to $x$,
\begin{equation}
    F(q_2,q_\lambda) > tF(q_1,q_c)+(1-t)F(\overline{q_1},q_c).
\end{equation}
For $\lambda\in(0,t)$ notice
\begin{equation}\label{eq: q_lambda<q_2}
    \begin{split}
        \lambda q_1+(1-\lambda)\overline{q_1}&=\overline{q_1}-\lambda(\overline{q_1}-q_1)\\
        &>\overline{q_1}-t(\overline{q_1}-q_1)=q_2.
    \end{split}
\end{equation}
Now by the strict concavity of $F$ in the first variable and the continuity of $F$ in the second variable, we may choose $\lambda\in (0,t)$ close enough to $t$ such that 
\begin{equation}\label{eq: F(q_2)less than}
    F(q_2,q_2)>\lambda F(q_1,q_\lambda)+(1-\lambda)F(\overline{q_1},q_\lambda).
\end{equation}
Therefore combining the inequalities in \eqref{eq: q_lambda<q_2} and \eqref{eq: F(q_2)less than} yields
\begin{equation}
   \del{\lambda F(q_1,q_\lambda)+(1-\lambda)F(\overline{q_1},q_\lambda)-F(q_2,q_2)}(q_\lambda-q_2)<0. 
\end{equation}
Thus \eqref{eq:notD} can take on negative values.
On the other hand, if we choose any $\xi^1\neq \xi^2$ such that $Q_1=Q_2$, we get that \eqref{eq:notD} is positive because of the strict convexity of $L$ with respect to $q$.

\begin{example}\label{ex: quadratic}
    Let $L(x,v,Q)=x^2+v^2+xQ$, then $E[Q]$ is monotone, but $L$ is not Lasry-Lions monotone.
\end{example}
First, we show that $L(x,v,Q)$ satisfies the assumptions \ref{Lassum2}. 
\begin{itemize}
    \item $D^2_{vv}L(X(t))=D^2_{xx}L(X(t))=2$ and $D^2_{xv}L(X(t))=0$ so $D^2_{(x,v)}L(X(t))=2>0$.
    \item $D^2_{vQ}L(X(t))=0$ and $D^2_{xQ}L(X(t))=1$.
    \item Since $c=2$ and $M=1$, $c^2>2M^2$.
\end{itemize}
Therefore, by Theorem \ref{thm: G mon}, the error map $E$ is monotone.

However, we may show for the same $L$ that 
\begin{equation}\label{eq: LL}
    \int \del{L\del{x,v,\int a\dif\mu_1(\xi,a)}-L\del{x,v,\int a\dif\mu_2(\xi,a)}}\dif \del{\mu_1-\mu_2}(x,v)
\end{equation}
can be positive or negative depending on choice of $\mu_1$ and $\mu_2$. Letting $\mu_\lambda=\lambda\mu_1+(1-\lambda)\mu_2$, \eqref{eq: LL} becomes
\begin{equation}
    \int\int_0^1D_QL\del{x,v,\int a\dif \mu_\lambda(\xi,a)}\int a\dif\del{\mu_1-\mu_2}\dif\lambda\dif\del{\mu_1-\mu_2}(x,v)
\end{equation}
For $L(x,v,Q)=x^2+v^2+xQ$, $D_QL(x,v,Q)=x$, so it suffices to show
\begin{equation}
    \int x\dif\del{\mu_1-\mu_2}(x,v)\int v\dif\del{\mu_1-\mu_2}(x,v)
\end{equation}
can take positive or negative values. Let 
\begin{align}
    \mu_1(x,v)&=\delta_{x_1}(x)\delta_{v_1}(v)\\
    \mu_2(x,v)&=\delta_{x_2}(x)\delta_{v_2}(v).
\end{align}
Then \eqref{eq: LL} is equal to
\begin{equation}
    (x_1-x_2)(v_1-v_2).
\end{equation}
Suppose $x_1>x_2$, then \eqref{eq: LL} is positive if $v_1>v_2$ and negative if $v_1<v_2$.
\begin{remark}
    Though this example is not Lasry-Lions monotone, it is displacement monotone.
    To see this, let $X^1,X^2,V^1,V^2$ be random variables. Note $Q_i=\mathbb{E}\sbr{V^i}$. Then 
    \begin{align*}
    	\mathbb{E}&\sbr{\del{D_xL(X^1,V^1,Q_1)-D_xL(X^2,V^2,Q_2)}\del{X^1-X^2}+\del{D_vL(X^1,V^1,Q_1)-D_vL(X^2,V^2,Q_2)}\del{V^1-V^2}}\\
    	&= \mathbb{E}\sbr{2\del{X^1-X^2}^2+(Q_1-Q_2)\del{X^1-X^2}+2\del{V^1-V^2}^2}\\
    	&\geq 2\mathbb{E}\sbr{X^1-X^2}^2+\mathbb{E}\sbr{X^1-X^2}\mathbb{E}\sbr{V^1-V^2}+2\mathbb{E}\sbr{V^1-V^2}^2\\
    	&\geq \frac{1}{2}\del{\mathbb{E}\sbr{X^1-X^2}+\mathbb{E}\sbr{V^1-V^2}}^2\geq 0.
    \end{align*}
\end{remark}

\begin{example}
    As a generalization of Example \ref{ex: quadratic}, consider $L(x,v,Q)=f(x)v^2+g(x)+h(x)Q$ where $f,g,h\in C^2$. Note $D^2_{(x,v)}L\geq c>0$ if the following conditions are met.
    \begin{enumerate}
        \item There exists a $c_1>0$ such that $g''(x)\geq c_1$ for all $x\in \mathbb{R}$.
        \item $h''(x)=0$ for all $x\in\mathbb{R}$.
        \item There exists a $c_2>0$ such that $f''(x)x^2+4f'(x)x+2f(x)\geq c_2$ for all $x\in\mathbb{R}$.
    \end{enumerate}
     Additionally, if $2\enVert{h'(x)}_{L^\infty}^2<c^2$, the error map $E$ will be monotone. As in Example \ref{ex: quadratic}, we can show $L$ is not necessarily Lasry-Lions Monotone provided $h(x)$ is not constant. However, it is simple to show that under the same assumptions listed above, this example is displacement monotone. 
\end{example}

To show an $x$-dependent case where the displacement monotone condition is not met, we consider the following variant of the Cournot model.
\begin{example}
    Let $s\in(-1,0)$, $c_1<0$, and let $P$ be given by
    $$P'(q)=c_1-q^s.$$
    Then suppose $f\in C^2(\bb{R})$ is such that $f''(x)\geq c_2>0$, and $q\geq \delta>0$.
    The Lagrangian $L(x,q,Q)=-qP(q+\varepsilon Q)+f(x)$ is not displacement monotone. However, the error map $E$ from \eqref{error map} is monotone provided $\displaystyle\varepsilon<\frac{\sqrt{2}c}{-c_2+\delta^s}$, where $c:=\min\cbr{-2c_1,c_2}.$
\end{example}
First see that $L(x,q,Q)$ satisfies the Assumptions \ref{Lassum2}.
\begin{itemize}
    \item $D^2_{qq}L(x,q,Q)=-2P'(q+\varepsilon Q)-qP''(q+\varepsilon Q)\geq -2c_1$, $D^2_{xq}L(x,q,Q)=0$, and $D^2_{xx}L(x,q,Q)=f''(x)\geq c_2$, so $D^2_{(x,q)}L(x,q,Q)\geq \min\cbr{-2c_1,c_2}=c>0.$
    \item $D^2_{xQ}L(x,q,Q)=0$ and $D^2_{qQ}L(x,q,Q)=\varepsilon\del{-P'(q+\varepsilon Q)-qP''(q+\varepsilon Q)}\leq \varepsilon(-c_1+\delta^s)$. Thus $M=\varepsilon(-c_1+\delta^s)<\infty$.
    \item $M^2<2c^2$ for $\displaystyle\varepsilon<\frac{\sqrt{2}c}{-c_2+\delta^s}$.
\end{itemize}
Therefore, by Theorem \ref{thm: G mon1} the error map $E$ from \eqref{error map} is monotone. To see that the Lagrangian is not displacement monotone, we must show that the sign of 
\begin{equation}\label{eq:notD2}
    \mathbb{E}\sbr{\del{D_qL(X^1,\xi^1,Q_1)-D_q(X^1,\xi^2,Q_2)}\del{\xi^1-\xi^2}+\del{D_xL(X^1,\xi^1,Q_1)-D_x(X^1,\xi^2,Q_2)}\del{X^1-X^2}}
\end{equation}
changes depending on choice of random variables $X^1,X^2,\xi^1,\xi^2$. If $X^1=X^2$, \eqref{eq:notD2} becomes
\begin{equation}
        \mathbb{E}\sbr{\del{-\xi^1P'(\xi^1+\varepsilon Q_1)-P(\xi^1+\varepsilon Q_1)+\xi^2P'(\xi^2+\varepsilon Q_2)+P(\xi^2+\varepsilon Q_2)}\del{\xi^1-\xi^2}}.
\end{equation}
Note that we showed this could take on positive or negative values for a similarly defined $P$ in Example \ref{ex:Cournot}, and the $P$ in this example has the same qualities required. Thus, the Lagrangian is not displacement monotone.

It is also simple to show that this example is not Lasry-Lions Monotone.
	\bibliographystyle{alpha}
	\bibliography{mybib}
\end{document}